\newtheorem{theorem}{Theorem}[section]
\newtheorem{lemma}[theorem]{Lemma}
\newtheorem{proposition}[theorem]{Proposition}
\theoremstyle{definition}
\theoremstyle{remark}
\newtheorem{remark}[theorem]{Remark}
\numberwithin{equation}{section}
\newcommand{\abs}[1]{\left|#1\right|}
\newcommand{\C}{\mathbb{C}}
\newcommand{\diff}{\operatorname{d}}
\renewcommand{\H}{\mathbb{H}}
\newcommand{\id}{\operatorname{id}}
\newcommand{\N}{\mathbb{N}}
\newcommand{\Nil}{\operatorname{Nil}}
\newcommand{\nor}{\operatorname{norm}}
\newcommand{\PSL}{\operatorname{PSL}}
\newcommand{\R}{\mathbb{R}}
\renewcommand{\S}{\mathbb{S}}
\newcommand{\T}{\operatorname{T}}
\newcommand{\vol}{\operatorname{area}}
\renewcommand{\phi}{\varphi}
\renewcommand{\theta}{\vartheta}
\newcommand{\la}{\langle}
\newcommand{\ra}{\rangle}
\begin{document}

\title{}
\title
{Constant mean curvature $k$-noids in homogeneous manifolds}
\date{\today}
\author[Plehnert]{Julia Plehnert}
\address{Georg-August-Universit\"at G\"ottingen, Research Group Discrete Differential Geometry, Faculty of Mathematics, Lotzestrasse 16-18, 37083 G\"ottingen, Germany}
\email{j.plehnert@math.uni-goettingen.de}

\subjclass[2010]{Primary 53C42}

\begin{abstract}
For each $k\geq2$, we construct two families of surfaces with constant mean curvature $H$ for $H\in[0,1/2]$ in $\Sigma(\kappa)\times\R$ where $\kappa+4H^2\leq0$. The surfaces are invariant under $2\pi/k$-rotations about a vertical fiber of $\Sigma(\kappa)\times\R$, have genus zero, and a finite number of ends. The first family generalizes the notion of $k$-noids: It has $k$ ends, one horizontal and $k$ vertical symmetry planes. The second family is less symmetric and has two types of ends. Each surface arises as the conjugate (sister) surface of a minimal graph in a homogeneous $3$-manifold. The domain of the graph is non-convex in the second family. For $\kappa=-1$ the surfaces with constant mean curvature $H$ arise from a minimal surface in $\widetilde{\PSL}_2(\R)$ for $H\in(0,1/2)$ and in $\Nil$ for $H=1/2$. For $H=0$, the conjugate surfaces are both minimal in a product space.
\end{abstract}

\maketitle

\section{Introduction}

Recently various mathematicians constructed minimal (\cite{MR2012,pyo2011,younes2010}) and constant mean curvature (\cite{MT2011,GK2010}) surfaces in three dimensional homogeneous manifolds with four dimensional isometry group via minimal (vertical) graphs above convex domains $\Omega$. The existence of minimal surfaces follows since in those manifolds the preimage $\pi^{-1}(\Omega)$ under the Riemannian fibration $\pi$ is a mean convex domain. In this paper we construct surfaces with constant mean curvature $H\in[0,1/2]$, which arise from minimal graphs, to some extent above non-convex domains. 

The paper begins with the setup of sister surfaces in homogeneous manifolds in Section \ref{S:Sisterhom}. In Section \ref{S:refsurfaces} we define a number of reference surfaces, which we use as barriers. First of all, we summarize a classification of ruled minimal surfaces in homogeneous manifolds. Followed by a subsection on graphs where we prove the existence of a Scherk type minimal surface in homogeneous manifolds, which generalizes known surfaces. The section ends with the construction of  $k$-noids with constant mean curvature $H\in[0,1/2]$, we use the minimal sister as a barrier. In the final section we construct the family of the less symmetric $2k$-noids. The main step is the construction of an appropriate mean convex domain in order to solve a Plateau problem, see Section \ref{SS:Plateau}.

\section{Sister surfaces in homogeneous $3$-manifolds}\label{S:Sisterhom}

We construct constant mean curvature (cmc) surfaces in simply connected homogeneous $3$-manifolds with an at least $4$-dimensional isometry group. Such a manifold is a Riemannian fibration $\pi\colon E\to\Sigma(\kappa)$ with geodesic fibers and constant bundle curvature $\tau$, where $\Sigma(\kappa)$ is a two-dimensional space form. We write $E(\kappa,\tau)$ for those spaces, see \cite{daniel2007}. 

An interpretation of the bundle curvature is the vertical distance of a horizontal lift of a closed curve, which was proven in \cite{plehnert2013}.

\begin{lemma}[Vertical distances]\label{l:vertdistances}
Let $\gamma$ be a closed Jordan curve in the base manifold $\Sigma(\kappa)$ of a Riemannian fibration with constant bundle curvature $\tau$ and geodesic fibers. Let $\Delta$ be the bounded domain defined by $\partial\Delta=\gamma$, we have
\[
d(\tilde{\gamma}(0),\tilde{\gamma}(l))=2\tau\vol(\Delta),
\]
where $\tilde{\gamma}$ is the horizontal lift with $\pi(\tilde{\gamma}(0))=\pi(\tilde{\gamma}(l))$, $\vol$ is the oriented volume and $d(p,q)$ denotes the signed vertical distance, which is positive if $\overline{pq}$ is in fiber-direction $\xi$.
\end{lemma}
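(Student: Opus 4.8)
The plan is to realize the fibration locally as a principal $\R$-bundle (or $\S^1$-bundle) equipped with a connection whose curvature is the bundle curvature, and then to read off the vertical displacement of a horizontal lift as a holonomy integral, which Stokes' theorem converts into an area integral over $\Delta$.

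First I would fix a coordinate model of $E(\kappa,\tau)$. Since the fibers are geodesics and $\xi$ is a unit vertical Killing field, I introduce a fiber coordinate $t$ with $\partial_t=\xi$, so that $t$ measures signed arc length along each fiber, and write the metric in the bundle form $g=\pi^*g_{\Sigma(\kappa)}+\omega^2$ with connection $1$-form $\omega=dt+A$, where $A$ is pulled back from the base. The horizontal distribution is exactly $\ker\omega$, and a curve is a horizontal lift precisely when $\omega$ annihilates its velocity.

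Next I parametrize the horizontal lift as $\tilde\gamma(s)=(\gamma(s),t(s))$ on $[0,l]$. The horizontality condition $\omega(\dot{\tilde\gamma})=0$ reads $\dot t=-A(\dot\gamma)$, so integrating gives the total change of the fiber coordinate $t(l)-t(0)=-\oint_\gamma A$. Because $\gamma=\partial\Delta$ bounds the disk $\Delta$, on which $A$ may be taken globally defined ($\Delta$ being simply connected), Stokes' theorem yields $t(l)-t(0)=-\int_\Delta dA$. Since $t$ is signed arc length along the fiber with $\partial_t=\xi$, this difference is exactly the signed vertical distance $d(\tilde\gamma(0),\tilde\gamma(l))$.

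The remaining and genuinely content-bearing step is to identify $d\omega=dA$ with the curvature of the fibration, hence with the area form. The defining property of the bundle curvature is $\bar\nabla_X\xi=\tau\,(X\times\xi)$ for the Levi-Civita connection $\bar\nabla$ of $E$, where $\times$ is the cross product. Dualizing $\xi$ to the $1$-form $\langle\,\cdot\,,\xi\rangle$, which equals $\omega$, the torsion-free and metric identity $d\omega(X,Y)=\langle\bar\nabla_X\xi,Y\rangle-\langle\bar\nabla_Y\xi,X\rangle$ together with $\langle Y\times\xi,X\rangle=-\langle X\times\xi,Y\rangle$ gives $d\omega(X,Y)=2\tau\det(X,\xi,Y)$. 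On horizontal vectors the right-hand side is $2\tau$ times the pullback of the area form of $\Sigma(\kappa)$, up to an orientation sign, so $\int_\Delta dA=-2\tau\,\vol(\Delta)$; combining this with $t(l)-t(0)=-\int_\Delta dA$ yields the asserted $d(\tilde\gamma(0),\tilde\gamma(l))=2\tau\,\vol(\Delta)$. I expect the main obstacle to be the bookkeeping of orientations and signs, namely the orientation of $\gamma$ relative to $\Delta$, the sign in $\bar\nabla_X\xi=\tau\,X\times\xi$, and the direction of $\xi$. I would pin these down by testing the formula on the explicit model $\Nil=E(0,1/2)$, where $A=\tfrac12(y\,dx-x\,dy)$ gives $dA=-dx\wedge dy=-2\tau\,dA_{\Sigma}$ and hence the stated sign convention.
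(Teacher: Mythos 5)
Your proposal is correct. Note that this paper does not actually contain a proof of Lemma \ref{l:vertdistances}: it defers to the reference [plehnert2013], so there is no in-paper argument to compare against line by line. Judged on its own merits, your argument is the natural one and it goes through: writing the metric in Killing-submersion form $g=\pi^*g_{\Sigma(\kappa)}+\omega^2$ with $\omega=\langle\,\cdot\,,\xi\rangle=dt+A$ and $A$ basic (legitimate here because $\xi$ is a unit Killing field and the bundle is trivial over the simply connected domain $\Delta$), horizontality gives $t(l)-t(0)=-\oint_\gamma A$, Stokes converts this to $-\int_\Delta dA$, and the identity $d\omega(X,Y)=\langle\bar\nabla_X\xi,Y\rangle-\langle\bar\nabla_Y\xi,X\rangle$ combined with Daniel's defining relation $\bar\nabla_X\xi=\tau\,X\times\xi$ yields $d\omega=-2\tau\,\pi^*(\mathrm{area\ form})$ on horizontal vectors, hence the displacement $2\tau\vol(\Delta)$. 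Your sign bookkeeping is consistent: in the model $\Nil_3=E(0,1/2)$ with $\omega=dz+\tfrac12(y\,dx-x\,dy)$ one indeed gets $dA=-dx\wedge dy$, and the counterclockwise unit circle lifts with $\dot z=\tfrac12$, giving displacement $\pi=2\tau\vol(\Delta)$ in the $+\xi$ direction, matching the lemma's sign conventions (signed distance, oriented volume). The only point a full write-up should make explicit is the justification of the bundle form of the metric with $A$ pulled back from the base (i.e., that $d\omega$ is basic, which follows from $i_\xi d\omega=L_\xi\omega-d(i_\xi\omega)=0$); you state it as setup, but it is standard Killing-submersion theory and not a genuine gap.
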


One way to construct cmc surfaces is the conjugate Plateau construction, see \cite{karcher2005} for an introduction in the case of space forms. This approach uses the Lawson correspondence between isometric surfaces in space forms \cite{lawson1970}. Recently Daniel generalized the correspondence to homogeneous $3$-manifolds. Since the construction uses reflection about vertical and horizontal planes, we consider only a special case of Daniel's correspondence:

\begin{theorem}[{\cite[Theorem 5.2]{daniel2007}}] \label{t:correspondence} There exists an isometric correspondence 
between an MC $H$-surface $\tilde{M}$ in $\Sigma(\kappa)\times\R=E(\kappa,0)$ and a minimal surface $M$ in $E(\kappa+4H^2,H)$. Their shape operators are related by \begin{equation}\label{e:shapeoperators}\tilde{S}=JS+H\id,\end{equation} where $J$ denotes the $\pi/2$ rotation on the tangent bundle of a surface. Moreover, the normal and tangential projections of the vertical vector fields $\xi$ and $\tilde{\xi}$ are related by
\begin{equation}
\la\tilde{\xi},\tilde{\nu}\ra=\la\xi,\nu\ra,\qquad J\diff f^{-1}(T)=\diff \tilde{f}^{-1}(\tilde{T}),
\end{equation}
where $f$ and $\tilde{f}$ denote the parametrizations of $M$ and $\tilde{M}$ respectively, $\nu$ and $\tilde{\nu}$ their unit normals, and $T$, $\tilde{T}$ the projections of the vertical vector fields on $\T M$ and $\T\tilde{M}$.
\end{theorem}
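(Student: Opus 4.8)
My plan is to prove the correspondence via the fundamental theorem of surface theory for the spaces $E(\kappa,\tau)$. First I would record the compatibility equations that the data $(I,S,T,\la\xi,\nu\ra)$ of an isometric immersion into $E(\kappa,\tau)$ must satisfy, where $J$ is the $\pi/2$-rotation, $K$ the intrinsic curvature, and $\la\xi,\nu\ra$ the angle function: the Gauss and Codazzi equations together with the two structural equations governing $T$ and $\la\xi,\nu\ra$,
\begin{align}
K &= \det S + \tau^2 + (\kappa-4\tau^2)\la\xi,\nu\ra^2,\\
(\nabla_X S)Y - (\nabla_Y S)X &= (\kappa-4\tau^2)\la\xi,\nu\ra\bigl(\la Y,T\ra X - \la X,T\ra Y\bigr),\\
\nabla_X T &= \la\xi,\nu\ra\,(S-\tau J)X,\\
X\la\xi,\nu\ra &= -\la (S-\tau J)X,\,T\ra,
\end{align}
together with $\abs{T}^2+\la\xi,\nu\ra^2=1$. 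The fundamental theorem then asserts that any data satisfying these equations integrate to an immersion into $E(\kappa,\tau)$, unique up to ambient isometry. The whole correspondence rests on one algebraic observation: the coefficient $\kappa-4\tau^2$ is invariant under the passage from the minimal datum $(\kappa+4H^2,H)$ to the cmc datum $(\kappa,0)$, since $(\kappa+4H^2)-4H^2=\kappa=\kappa-4\cdot0^2$.

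Second, I would fix the candidate transformation: keep the metric, $\tilde I=I$; keep the angle function, $\la\tilde\xi,\tilde\nu\ra=\la\xi,\nu\ra$; set $\tilde S=JS+H\,\id$; and set $\tilde T=JT$, a choice dictated by the Codazzi equation as verified below. Two checks are immediate. Because $J$ is skew, $\operatorname{tr}(JS)=0$, so $\operatorname{tr}\tilde S=2H$ and $\tilde M$ has mean curvature $H$ exactly when $M$ is minimal. For the Gauss equation, minimality gives $S=\begin{pmatrix} a & b\\ b & -a\end{pmatrix}$ in an orthonormal frame, whence a direct computation yields $\det\tilde S=\det S+H^2$; combined with the invariance of $\kappa-4\tau^2$ and $\la\tilde\xi,\tilde\nu\ra=\la\xi,\nu\ra$, the target Gauss equation reduces to the source one.

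Third, and hardest, I would check that $\tilde S=JS+H\,\id$ preserves the Codazzi equation. The point is that $J$ is parallel, $\nabla J=0$, so $(\nabla_X\tilde S)Y-(\nabla_Y\tilde S)X=J\bigl[(\nabla_X S)Y-(\nabla_Y S)X\bigr]$, and applying $J$ to the source right-hand side converts $\la Y,T\ra X-\la X,T\ra Y$ into its $J$-image; a short two-dimensional identity shows this equals $\la Y,\tilde T\ra X-\la X,\tilde T\ra Y$ \emph{precisely} when $\tilde T=JT$, which is why that relation is forced. The same substitution, together with $J^2=-\id$ and the fact that $J$ is an isometry, turns the source structural equations (with $\tau=H$) into the target ones (with $\tau=0$); for instance $\nabla_X\tilde T=J\nabla_X T=J\la\xi,\nu\ra(S-HJ)X=\la\xi,\nu\ra(JS+H\,\id)X=\la\xi,\nu\ra\,\tilde S X$, and the $\la\xi,\nu\ra$-equation transforms identically using $\la JSX,JT\ra=\la SX,T\ra$ and $\la X,JT\ra=-\la JX,T\ra$. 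Once all four compatibility equations are verified, the fundamental theorem produces the immersion $\tilde f$ into $E(\kappa,0)=\Sigma(\kappa)\times\R$, unique up to isometry, with the asserted relations \eqref{e:shapeoperators} between the shape operators and the stated relations between the vertical fields. The only genuinely delicate part is the bookkeeping of signs in the Codazzi and structural equations.
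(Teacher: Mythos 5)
The paper itself offers no proof of this statement: it is quoted directly from Daniel \cite[Theorem 5.2]{daniel2007}, so the only ``proof'' in the paper is the citation. Your proposal is, in substance, a correct reconstruction of Daniel's original argument: the four compatibility equations you record are exactly Daniel's structure equations for immersions into $E(\kappa,\tau)$, the key algebraic point is indeed the invariance of $\kappa-4\tau^2$ under $(\kappa+4H^2,H)\mapsto(\kappa,0)$, and the data transformation $(I,S,T,\la\xi,\nu\ra)\mapsto(I,JS+H\id,JT,\la\xi,\nu\ra)$ together with the verifications you sketch (traceless $S$ gives $\operatorname{tr}\tilde S=2H$ and $\det\tilde S=\det S+H^2$; $\nabla J=0$ plus the two-dimensional identity $\la Y,T\ra JX-\la X,T\ra JY=\la Y,JT\ra X-\la X,JT\ra Y$ handles Codazzi and the equations for $T$ and the angle function) is precisely how Daniel establishes the correspondence. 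Two hypotheses you should state explicitly to make the argument airtight: $JS$ is symmetric \emph{only because} $S$ is traceless, so minimality is what makes $\tilde S$ an admissible shape-operator datum at all; and the fundamental theorem integrates the data only on a \emph{simply connected} surface (otherwise the sister exists only on the universal cover), which is the standing assumption in Daniel's Theorem 5.2 and is implicit in the paper's use of the correspondence.
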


We call the isometric surfaces $M$ and $\tilde{M}$ sister surfaces, or sisters in short.

The idea of the conjugate Plateau construction is to solve a Plateau problem for a polygon, which consists of horizontal and vertical geodesics. Then the sister surface is bounded by a piecewise smooth curve contained in totally geodesic vertical/horizontal planes and the surface conormal is perpendicular to those planes, the so-called mirror planes, see \cite{MT2011}. Under certain assumptions Schwarz reflection about the horizontal and vertical mirror planes extends the surfaces smoothly without branch points.

We call related curves $\tilde{c}\subset\tilde{M}$ and $c\subset M$ sister curves. One computes directly that their normal curvatures and torsions are associated as follows:
$$\tilde{k}=-t+H\quad\text{and}\quad\tilde{t}=k.$$ In \cite{plehnert2013} we defind the twist $\alpha$ of the normal along a vertical geodesic $c$ as its total rotation speed with respect to a basic vector field, and proved  that
\begin{equation}\label{E:twist}
\alpha=\int\limits_c t+H l(c)\quad\mbox{and}\quad\tilde{k}=2H-\alpha'.
\end{equation}

We are interested in surfaces with cmc $H$ in product spaces $\Sigma(\kappa)\times\R$ with $\kappa\leq0$. The behaviour of the surfaces depends on $(H,\kappa)$. Let us distinguish two cases:
\begin{itemize}
\item $\kappa=0$: There is an isometric correspondence between surfaces with cmc $H$ in $\R^3$ and minimal surfaces in $3$-dimensional space forms with curvature $H$. Since the hyperbolic $3$-manifold is not a Riemannian fibration this case is not covered by the realtion above and we do not treat it here. For $H=0$ we get two conjugate minimal surfaces in $\R^3$. If $H>0$ the minimal surface is constructed in a Berger sphere, since the base manifold of this fibration is compact, we do not consider this case. 

\item $\kappa<0$: All product spaces have the same isometry group, so we consider $\H^2\times\R$, hence $\kappa=-1$. There are different cases: For $H=0$ the relation describes two minimal surfaces in $\H^2\times\R$, for $H\in(0,1/2)$ the surfaces arise from minimal surfaces in $E(4H^2-1,H)$, which has the same isometry group as $\tilde{\PSL}_2(\R)$. The third class of surfaces has constant mean curvature $1/2$ and its sister surface is minimal in $\Nil_3$. We do not consider the case $H>1/2$, since this would lead to the compact base manifold as above.
\end{itemize}

\section{Reference surfaces}\label{S:refsurfaces}

\subsection{Ruled surfaces in homogeneous $3$-manifolds}\label{SS:umbrella}
In \cite{GK2009} Gro\ss{}e-Brauckmann and Kusner discussed ruled minimal surfaces in homogeneous manifolds and their sisters systematically. Since we need some of them as barriers in our surface construction, we present a short outline of their work.

In $\R^3$ a ruled surface is defined for an arc-length parametrized curve $c\colon I\to\R^3$ and an unit vector field $v(t)$ along $c$ with $v(t)\perp c'(t)$ as the mapping $$f\colon\R\times I\to \R^3,\quad f(s,t)\coloneqq c(t)+s v(t).$$ The curve $c$ is called directrix; the rulings $\gamma(s)\coloneqq f(s,t_0)$ are asymptote lines. The classic examples of ruled surfaces are: cylinder, cone and hyperbolic paraboloid (doubly ruled).

The helicoid $f(s,t)=(s\cos t,s\sin t, ht)$, $h\in\R\cup\{\pm\infty\}$ is also a ruled surface. Its axis is a vertical geodesic $c(t)\coloneqq(0,0,ht)$ and it has horizontal geodesics as rulings $\gamma(s)\coloneqq (s\cos t_0,s\sin t_0,ht_0)$. The pitch is given by the parameter $h$, it controls the constant rotation-speed. For $h=0$ it is a horizontal plane and for $h=\pm\infty$ it is a vertical plane. We claim, it is minimal, because the helicoid is invariant under $\pi$-rotation about its rulings. Let $\tilde{\gamma}$ be a curve, which is perpendicular to a ruling $\gamma$. The normal curvature $\kappa_{\nor}(\tilde{\gamma})$ changes sign under rotation, therefore $\kappa_{\nor}(\tilde{\gamma})=0$, i.e. $\tilde{\gamma}$ is an asymptotic direction and perpendicular to $\gamma$. With the Euler-curvature-formula $g(Sv_\alpha,v_\alpha)=\kappa_1\cos^2\alpha+\kappa_2\sin^2\alpha$ we get $\kappa_1=-\kappa_2$, since $g(Sv_{\alpha_i},v_{\alpha_i})=0,\,i=1,2$ where $\alpha_1=\alpha_2-\pi/2$. Actually each complete ruled minimal surface is either the plane or the helicoid.

In $E(\kappa,\tau)$ each $\pi$-rotation about horizontal and vertical geodesics is an isometry. Therefore, surfaces which are invariant under $\pi$-rotations about those geodesics are minimal. Hence, we consider surfaces foliated by geodesics.

\begin{enumerate}
\item Vertical planes

A vertical plane is defined as the preimage $\pi^{-1}(c)$ of a geodesic $c\subset\Sigma$. Vertical planes are minimal, since the horizontal lift of $c$ is a geodesic. Therefore, the surface is foliated by geodesics. Moreover, a $\pi$-rotation about each geodesic leaves the plane invariant. In a product space we have for example $\{c\}\times \R$  for a geodesic $c\in\Sigma(\kappa)$. In $E(4,1)=\S^3$ a vertical plane is a Clifford torus.

\item Horizontal umbrellas

Horizontal umbrellas correspond to horizontal planes. They are defined by the exponential map of the horizontal tangent subspace of $\T_p E(\kappa,\tau)$ in a point $p\in E(\kappa,\tau)$. Therefore, each umbrella consists of all horizontal radial geodesics starting at $p$. In $\Sigma(\kappa)\times\R$ a horizontal umbrella is totally geodesic, whereas for $\tau\ne0$ the surface has non-horizontal tangent spaces except in $p$. Horizontal umbrellas are minimal. For $\kappa\leq0$ or $\tau=0$ they are sections. Each surface is of disc type for $\kappa\leq0$ and a sphere for $\kappa>0$, for example a geodesic $2$-sphere in $\S^3$.

\item Horizontal slices

We interpret a horizontal slice as a horizontal helicoid, where the axis is a horizontal geodesic $c$ and the rulings are the horizontal geodesics, which are perpendicular to $c$. Horizontal slices are minimal. Topological it is a disc for $\kappa\leq0$, a torus if $\kappa>0, \tau\ne0$, and a sphere if $\kappa>0, \tau=0$.

\item Vertical helicoids

Last but not least we consider vertical helicoids $M(s)$. It is family of minimal surfaces, where the axis is a fiber of $\pi:E\to\Sigma$ and the rulings are horizontal geodesics, which rotate along the axis with constant speed $s$.
As in $\R^3$, we have special cases: The surface $M(\tau)\subset E(\kappa,\tau)$ is a vertical plane and $M(\pm\infty)$ are horizontal umbrellas.
\end{enumerate}

\subsection{Minimal surface equation for graphs}

To derive a minimal surface equation we consider $\R_{>0}\times\R^2$ with
\[
\diff s^2=\lambda^2(\diff x^2+\diff y^2)+(2Hy\lambda^2\diff x+\diff z)^2,
\]
where $\lambda=\frac{1}{\sqrt{-\kappa-4H^2} y}$. This is a model for $E(\kappa+4H^2,H)$ with $\kappa+4H^2<0$.

The Riemannian fibration $\pi\colon E(\kappa+4H^2,H)\to\Sigma(\kappa+4H^2)$ for these coordinates is given by the projection onto the first two coordinates. The vertical vector field is $\xi=\partial_{z}$. 

We generalise the notion of a graph in Riemannian fibrations with geodesic fibers. Let $\pi\colon E\to B$ be a fiber bundle over a base space $B$. Then a continuous map $s\colon B\to E$ is called section if $\pi(s(x))=x$ for all $x\in B$. Let $E$ be the Riemannian fibration $E(\kappa+4H^2,H),\,\kappa+4H^2<0$. We call the surface $\{s(x)\in E(\kappa+4H^2,H)\colon x\in \Omega\}$ a graph over $\Omega\subset \Sigma(\kappa+4H^2)$ if $s$ is transversal to the fibers.

Let $M$ be a coordinate graph $z=u(x,y)$ in $E\coloneqq E(\kappa+4H^2,H)$ endowed with the metric from above. 

Then $M$ is minimal if $u$ is a solution of
\begin{equation}\label{E:MCE}
2w(u_{xx}+u_{yy})-\left(\left(\frac{2H}{y(-4H^2-\kappa)}+u_x\right)w_x+u_y w_y\right)=0,
\end{equation}
where $w=1+(u_x/\lambda+2H\lambda y)^2+(u_y/\lambda)^2$.

Let us change the coordinates $x=r\cos s, y=r\sin s,\,r>0,\,0<s<\pi/2$ and assume that the solution $u(r,s)$ is constant along $r\mapsto u(r,s)$, i.e. $\partial_r u=0$ and therefore let $\cdot'$ denote the derivate with respect to $s$. Equation \eqref{E:MCE} is then equivalent to  
\[
2w(4H^2+\kappa)u''-w'(2H+(4H^2+\kappa)u')=0.
       \]
    
      This is equivalent to       
       \[
       \frac{(2H+(4H^2+\kappa)u')^2}{w}
       =c,\,\mbox{ for a constant } c\in\R.
      \]
   Using $w=1-\frac{4H}{4H^2+\kappa}-4H\sin^2(s)u'-(4H^2+\kappa)\sin^2(s)u'^2$ we get
   \[
   u'(s)=\frac{1}{4H^2+\kappa}\left(2H\pm\sqrt{\frac{(4H^2\cos^2 s-(4H^2+\kappa))\frac{-c}{4H^2+\kappa}}{1-\frac{-c}{4H^2+\kappa}\sin^2s}}\right),
   \]
   hence with $c=-(4H^2+\kappa)$
   \begin{equation}\label{E:Scherk}
   u(s)=\frac{1}{4H^2+\kappa}
   	\left(
   		2Hs\pm\int\limits_0^s\sqrt{
   			\frac{4H^2\cos^2 t-(4H^2+\kappa)}{1-\sin^2t}}\diff t \right)
   \end{equation}
   is a minimal section.
   
For $\kappa=-1$ and $H=0$ this surface arises also in a family of screw motion surfaces in $\H^2\times\R$ deduced in \cite{earp2008}, moreover in \cite{CR2010} it is used in the construction of a complete minimal graph to prove the existence of a harmonic diffeomorphism from $\C$ to $\H^2$.
      
      In the universal cover of $\PSL_2(\R)$ the solution was derived in \cite{younes2010} and used to prove a Jenkins-Serrin type theorem on compact domains.
   
In general Equation \eqref{E:Scherk} parametrizes a minimal graph of Scherk type in $E(\kappa+4H^2,H),\,\kappa+4H^2<0$, where we consider polar coordinates in the upper half-plane model of $\Sigma(\kappa+4H^2)$. For the right choice of signs the surface has zero boundary data for $s=0$ (positive $x$-axis) and tends to infinity for $s\to \pi/2$. Hence $M$ is a minimal graph above a domain, which is bounded by a geodesic.

By isometries of $E(\kappa+4H^2,H)$ we may define a minimal graph $M$ for any geodesic $\gamma\subset\Sigma(\kappa+4H^2)$, such that $M$ converges to infinity on $\gamma$ and has asymptotic values zero on a subset of $\partial \Sigma(\kappa+4H^2)$.

\subsection{Constant mean curvature $k$-noids in $\Sigma(\kappa)\times\R$}\label{S:knoid}

In \cite{GK2010} Gro\ss{}e-Brauckmann and Kusner describe the conjugate Plateau construction in $E(\kappa,\tau)$. They outline the construction of an one-parameter family of surfaces with constant mean curvature $H\geq0$ in $\Sigma(\kappa)\times\R$, which have $k$ ends and dihedral symmetry. The idea is to solve a compact Plateau problem of disc type in $E(\kappa+4H^2,H)$ and take a limit of Plateau solutions in order to solve an improper Plateau problem. Its sister generates the cmc surface by reflections about horizontal and vertical planes. We use the limiting minimal disc $M=M(a,k)$ as a barrier in our construction.

We prove that the minimal surface $M$ is the limit of a sequence of compact Plateau solutions $M_{(r,s)}$, which represent sections in $E(\kappa+4H^2,H)$, since the sequence is bounded. Each minimal disc $M_{(r,s)}$ is bounded by horizontal and vertical geodesics, see Figure \ref{f:knoid}. Let $\Gamma_{(r,s)}$ denote the boundary. The minimal surface $M_{(r,s)}$ is a section of the trivial line bundle $\pi\colon\Omega_r\subset E(\kappa+4H^2,H)\to\Delta_r$, where $\Omega_r\coloneqq\pi^{-1}(\Delta_r)$ is a mean convex domain, which is defined as the preimage of a triangle $\Delta_r\subset\Sigma(\kappa+4H^2)$. The triangle $\Delta_r$ is given by a hinge of lengths $a$ and $r$, enclosing an angle $\pi/k$. The parameter $a$ determines the length of the horizontal edge in the boundary of $M$, it defines the necksize in the cmc sister.

\begin{figure}[h]\begin{center}
\psfrag{a}{$a$}
\psfrag{r}{$r$}
\psfrag{pr}{$\pi$}
\psfrag{s}{$s$}
\psfrag{g}{$\gamma_r$}
\psfrag{phi}{$\pi/k$}
\includegraphics[width=5cm]{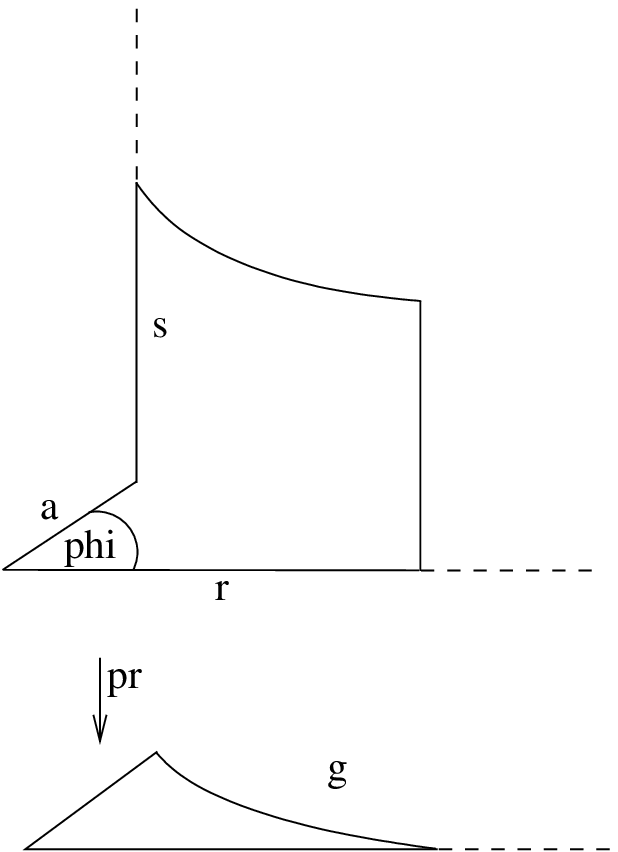}
\qquad\qquad
\includegraphics[width=5cm]{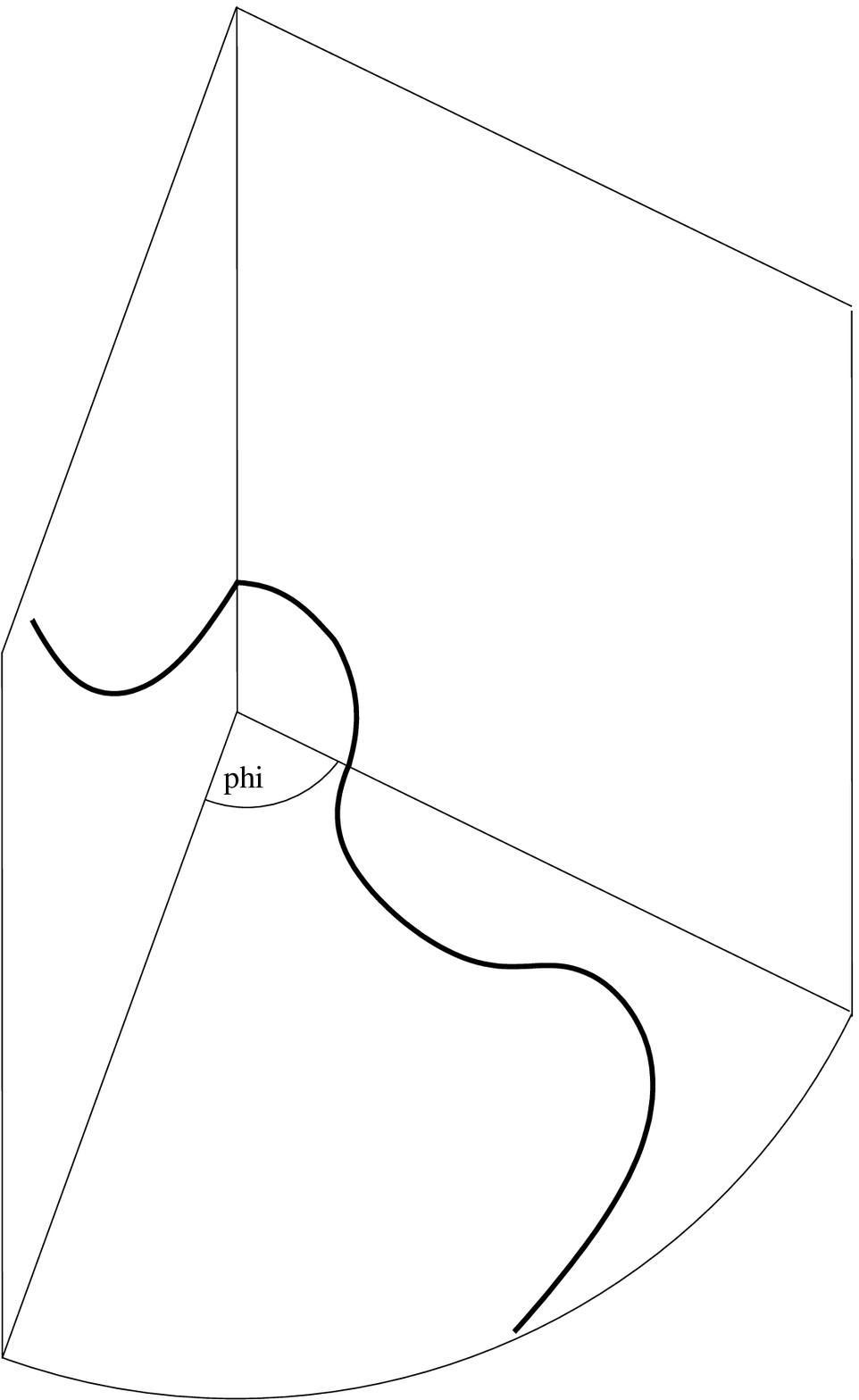}
\caption{Left: The boundary of the minimal disc in $E(\kappa+4H^2,H)$ and its projection, here $\kappa+4H^2<0$. Right: The boundary of the desired cmc surface in $\Sigma(\kappa)\times\R$.}\label{f:knoid}\end{center}\end{figure}

The Jordan curves $\Gamma_{(r,s)}$ converge to the boundary $\Gamma$ of the desired minimal disc $M$, in the sense that $\Gamma_n\cap K_x=\Gamma\cap K_x$ for any compact neighborhood $K_x\subset E(\kappa+4H^2,H)$ for $x\in\Gamma$ and $n$ large enough.
\begin{theorem}\label{T:knoid}
Suppose $\kappa+4H^2\leq0$. Then there exist minimal surfaces $M=M(a,k)$ with boundary $\Gamma$, such that $M$ is a section projecting to $\Delta\coloneqq\lim_{r\to\infty}\Delta_r$, and it extends without branch points by Schwarz reflection about the edges of $\Gamma$.
\end{theorem}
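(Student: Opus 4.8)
The plan is to obtain $M$ as a limit of compact minimal discs solving a family of Plateau problems over the triangles $\Delta_r$, and then to establish the graph property, the boundedness of the family, and the reflection property in turn. First I would solve the compact Plateau problem for each contour $\Gamma_{(r,s)}$. Since the edges of $\Delta_r$ are geodesics of $\Sigma(\kappa+4H^2)$, their preimages under $\pi$ are minimal vertical planes by Section \ref{SS:umbrella}, so $\Omega_r=\pi^{-1}(\Delta_r)$ is mean convex. As $\Gamma_{(r,s)}$ is a piecewise-geodesic Jordan curve lying in $\overline{\Omega_r}$, the convex hull property confines any spanning minimal disc to $\Omega_r$, and the Meeks--Yau theorem in the mean convex region yields an embedded least-area disc $M_{(r,s)}$ with $\partial M_{(r,s)}=\Gamma_{(r,s)}$.

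Next I would show that each $M_{(r,s)}$ is a section over $\Delta_r$, i.e.\ that $\pi|_{M_{(r,s)}}$ is a diffeomorphism onto $\Delta_r$. By construction $\Gamma_{(r,s)}$ consists of horizontal geodesics over the edges of $\Delta_r$ joined by vertical fiber segments over the vertices, so it projects monotonically onto $\partial\Delta_r$ and meets each fiber in at most a point or a single segment. Because translation along $\xi$ is an isometry of $E(\kappa+4H^2,H)$, a Rado-type sweeping argument applies: if some fiber met $M_{(r,s)}$ twice, a suitable vertical translate of $M_{(r,s)}$ would produce a first interior or boundary tangency with $M_{(r,s)}$, contradicting the maximum principle for minimal surfaces. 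Hence each fiber meets $M_{(r,s)}$ at most once; since $\Gamma_{(r,s)}$ projects onto $\partial\Delta_r$ and $\Delta_r$ is convex, $\pi|_{M_{(r,s)}}$ is onto and $M_{(r,s)}$ is a graph.

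The principal obstacle is the passage to the limit $r\to\infty$ (with $s$ growing appropriately), which is exactly the boundedness assertion. Here I would use the reference surfaces of Section \ref{S:refsurfaces} as barriers: vertical planes and horizontal slices bound $M_{(r,s)}$ from the sides and from below, while the Scherk type minimal graph of \eqref{E:Scherk}, which tends to $+\infty$ along the escaping geodesic edge $\gamma$, controls the height near that edge. Comparing against these barriers with the maximum principle gives uniform bounds for the defining functions $u_{(r,s)}$ of \eqref{E:MCE} on every compact subset of $\Delta=\lim_{r\to\infty}\Delta_r$; interior gradient estimates for the minimal section equation then yield smooth subconvergence on compact subsets to a minimal section $M$ over $\Delta$. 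The delicate point is to choose the barriers so that the bounds are two-sided and the limit is nondegenerate, so that $\Gamma_{(r,s)}$ converges to $\Gamma$ in the local sense stated before the theorem and $M$ inherits exactly the boundary $\Gamma$.

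Finally I would extend $M$ by Schwarz reflection. Each edge of $\Gamma$ is a horizontal or a vertical geodesic, and the $\pi$-rotation about such a geodesic is an ambient isometry of $E(\kappa+4H^2,H)$ by Section \ref{SS:umbrella}. By the reflection principle for minimal surfaces, rotating $M$ by $\pi$ about each boundary geodesic produces a smooth minimal continuation across that edge. Interior branch points cannot occur, since $M$ is a graph; along each edge $M$ is regular up to the boundary and its rotated copy is again a section over the rotated triangle, so the two pieces glue to an immersed surface and the extension is free of branch points.
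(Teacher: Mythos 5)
Your overall strategy coincides with the paper's (Meeks--Yau Plateau solutions in the mean convex region $\pi^{-1}(\Delta_r)$, graphicality of each solution, barriers plus the maximum principle and a gradient estimate, then Schwarz reflection), but there is a genuine gap in the limiting step: the only barrier you use to control the height near the escaping edge is the Scherk-type graph \eqref{E:Scherk}, and that surface exists only when $\kappa+4H^2<0$. The model in which \eqref{E:Scherk} is derived uses $\lambda=1/(\sqrt{-\kappa-4H^2}\,y)$, which degenerates at $\kappa+4H^2=0$, and the paper states the existence of the Scherk graph explicitly only for $\kappa+4H^2<0$. The theorem, however, claims the result for $\kappa+4H^2\leq 0$, and the equality case contains the two situations $\kappa=0,\,H=0$ (minimal graphs in $\R^3$) and $\kappa=-1,\,H=1/2$ (minimal sections in $\Nil_3$, the sisters of the cmc $1/2$ surfaces in $\H^2\times\R$). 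Over a flat base the limit of the triangles $\Delta_r$ is an infinite wedge, not a triangle with an ideal vertex, and the paper must switch to completely different barriers there: a helicoid with horizontal axis in $\R^3$, and the horizontal helicoid of Daniel and Hauswirth \cite{DH2009} in $\Nil_3$. Without some substitute barrier in the case $\kappa+4H^2=0$, your argument proves a strictly weaker statement than the theorem. (A small remark in the other direction: since the sections $M_r$ are monotone increasing by the maximum principle, only an upper bound is actually needed, so your worry about two-sided bounds is unnecessary.)

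A secondary weakness concerns branch points. Excluding interior branch points via graphicality is fine, but at the boundary you merely assert that $M$ is ``regular up to the boundary'' along each edge; the paper justifies the absence of boundary branch points at non-vertex points by citing \cite{GL1973}. More importantly, you do not treat the vertices of $\Gamma$ at all. At a vertex the interior angle has the form $\pi/n$ with $n\geq 2$, and gluing $M$ to its rotated copy across a single edge, as you propose, does not rule out a branch point at the corner. The paper's argument there is genuinely different: one assembles the $n$ copies of $M(a,k)$ obtained by successive rotations about the edges meeting at the vertex, observes that this union still has a barrier by construction and a continuously differentiable boundary at the vertex, and only then reduces to the non-vertex case.
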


The case $\kappa=-1$, $H=1/2$, i.e. the existence of a minimal surface with those properties in $\Nil_3$ was already proven in~\cite{plehnert2013}, the case $\kappa=-1$, $H=0$ was shown in~\cite{pyo2011}.

\begin{proof}
In order to define $\Gamma_{(r,s)}$ we consider a triangle $\Delta_r$ in the base manifold $\Sigma(\kappa+4H^2)$, it is well-defined by a hinge of lengths $a$ and $r$ enclosing an angle $\pi/k$. We lift the hinge horizontally, add a vertical geodesic of length $s$ in fiber direction to the end of the edge of length $a$ and lift the remaining edge of the geodesic triangle. By Lemma~\ref{l:vertdistances} the distance of the two endpoints is $s-2\tau\vol(\Delta_r)$, therefore with $s=r^2$ the distance is always in fiber direction as sketched in Figure~\ref{f:knoid}. We define $\Gamma_r\coloneqq\Gamma_{(r,r^2)}$ by adding the remaining vertical geodesic, it is contained in the boundary of a mean convex domain. By~\cite{MY1982} there exists an embedded minimal surface $M_r$ with boundary $\Gamma_r$. Moreover, it is an unique section $u_r$ of the line bundle $\pi\colon E(\kappa+4H^2,H)\to\Sigma(\kappa+4H^2)$ projecting to $\Delta_r$ and extends without branch points, see~\cite{plehnert2013}.

To prove that the sequence of minimal sections $M_r$ converges to a minimal surface $M$ on compact subsets, we have to show that a barrier exists. We distinguish three cases to show that the sequence is uniformly bounded on compact subsets $K\subset\Delta$:
\begin{itemize}
\item $\kappa+4H^2<0$: We claim the sequence is uniformly bounded by the Scherk-type minimal graph defined by Equation \eqref{E:Scherk} in polar coordinates $(r,s)$ on the first quadrant of the upper half-plane. For $\kappa+4H^2<0$ the limit of $\Delta_r$ is a triangle in $\Sigma\coloneqq\Sigma(\kappa+4H^2)$ with one ideal vertex in $\partial\Sigma$, let $\gamma_r$ denote the edge which closes the hinge, see Figure~\ref{f:knoid}. We have seen that we find a Scherk-type minimal graph for any geodesic $\gamma\subset\Sigma$. Let $n\in\N$ such that $K\subset\Delta_n$ and consider the Scherk-type minimal surface $S_n$ for the geodesic $\gamma_n$. Since $M_n$ and $S_n$ are both graphs we can move $S_n$ in vertical direction such that it is a barrier from above. By the maximum principle the sequence $M_r$ is uniformly bounded by $S_n$ on $K$.
\item $\kappa=0$, $H=0$: For $K\subset\Delta_m$ the sequence of minimal graphs $M_r=(x,y,u_r(x,y))\subset\R^3$ is bounded from above by a helicoid $H_m$ with horizontal axis, that depends on $a$ and $\pi/k$ only.
\item $\kappa+4H^2=0$, $H\ne0$: The manifold $E(\kappa+4H^2,H)$ is isometric to $\Nil_3$. In \cite{DH2009} Daniel and Hauswirth proved the existence of a horizontal helicoid in $\Nil_3$, which is a barrier from above, see~\cite{plehnert2013}.
\end{itemize}
Hence for all pairs $(\kappa,H)$ with $\kappa+4H^2\leq0$ the sequence $M_r$ of minimal graphs is uniformly bounded on each compact subset $K\subset\Delta$. By the maximum principle the sequence is monotone increasing on $\Delta_k$, $r\geq k$ and there exists a gradient estimate by \cite{RST2010}, hence a diagonal process yields a minimal graph $M(a,k)$ with boundary $\Gamma$. 

To see that $M(a,k)$ extends without branch points by Schwarz reflection we distinguish two cases. If $p$ is not a vertex of $\Gamma$ it is no branch point by \cite{GL1973}. If $p$ is a vertex, then the angle is of the form $\pi/n$ with $n\geq2$ and $n$ copies of $M(a,k)$ obtained by successive rotations about the appropriate edges have a barrier by construction. Moreover the boundary is continuously differentiable in $p$, hence we are in the first case and $p$ is no branch point.
\end{proof}

The minimal graph $M(a,k)$ has a simply connected sister in a product space, which generates a complete cmc surface in $\Sigma(\kappa)\times\R$ by Schwarz reflection:

\begin{theorem}
For $H\in[0,1/2]$ and $\kappa+4H^2\leq0$ there exists a family of complete surfaces $M_a$ in $\Sigma(\kappa)\times\R$ with constant mean curvature $H$, $k$ ends, one horizontal and $k$ vertical symmetry planes, $a>0$.
\end{theorem}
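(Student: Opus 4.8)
The plan is to obtain $M_a$ as the complete surface generated from the sister of the minimal graph $M=M(a,k)$ furnished by Theorem~\ref{T:knoid}. First I would feed $M\subset E(\kappa+4H^2,H)$ into the correspondence of Theorem~\ref{t:correspondence} to produce its sister $\tilde M_a\subset\Sigma(\kappa)\times\R=E(\kappa,0)$, a surface of constant mean curvature $H$. Because the correspondence is an isometry of the interior metrics, $\tilde M_a$ is again a disc, and it will play the role of the fundamental piece of $M_a$.

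The next step is to read off the boundary of $\tilde M_a$ from that of $M$. The boundary $\Gamma$ of $M$ is built from horizontal and vertical geodesics; each such edge is an ambient geodesic lying in the minimal surface, so along it the normal curvature vanishes, $k=0$. By the sister relations $\tilde k=-t+H$ and $\tilde t=k$ recorded before \eqref{E:twist}, the conjugate boundary curves then satisfy $\tilde t=0$ and are therefore planar. To identify the planes I would use the two transport rules of Theorem~\ref{t:correspondence}: along a horizontal edge the tangent is horizontal, so the tangential part $T$ of $\xi$ points along the conormal, and $J\diff f^{-1}(T)=\diff\tilde f^{-1}(\tilde T)$ together with $\la\tilde\xi,\tilde\nu\ra=\la\xi,\nu\ra$ forces $\tilde T$ along $\tilde c'$ and the conormal $\tilde\eta$ horizontal; hence the sister of a horizontal geodesic lies in a vertical totally geodesic plane $\pi^{-1}(\text{geodesic})$ and meets it orthogonally. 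The same computation for a vertical (fiber) edge, where $T=\xi=c'$, gives a conormal $\tilde\eta$ that is vertical, so the sister of a vertical geodesic lies in a horizontal slice and meets it orthogonally. Thus every boundary arc of $\tilde M_a$ lies in a mirror plane.

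With the mirror structure in hand I would complete the surface by iterated Schwarz reflection. The two hinge edges emanate from the vertical axis enclosing the angle $\pi/k$, so their sisters lie in two vertical planes that meet along a fiber at angle $\pi/k$; reflection in these two planes generates a dihedral group realising the $2\pi/k$-rotational symmetry and the $k$ vertical mirror planes. Reflection across the horizontal plane carrying the sister of the vertical edge yields the single horizontal symmetry plane. That the reflected surface is smooth and free of branch points is inherited from Theorem~\ref{T:knoid}: at interior boundary points one is in the situation of \cite{GL1973}, and at the vertices the no-branch-point conclusion for $M$ transfers through the isometric correspondence, since reflections are ambient isometries of $\Sigma(\kappa)\times\R$. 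Finally, the ideal vertex of $\Delta=\lim_{r\to\infty}\Delta_r$ contributes one unbounded end to the fundamental piece, and applying the $2\pi/k$-rotation produces the remaining copies, so $M_a$ has exactly $k$ congruent ends.

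I expect the main obstacle to be the verification that the conjugate boundary curves lie in single mirror planes along their entire length and meet them orthogonally -- equivalently, the period problem that the reflected copies close up consistently -- together with the control of the asymptotic geometry at the ideal end needed to conclude that $M_a$ is complete with precisely $k$ properly immersed ends rather than accumulating or overlapping sheets.
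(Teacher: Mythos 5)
Your first three paragraphs follow the paper's route exactly: apply Daniel's correspondence (Theorem \ref{t:correspondence}) to the minimal graph $M(a,k)$ of Theorem \ref{T:knoid}, identify the sister boundary arcs as mirror curves (sisters of the horizontal geodesics lie in vertical planes, the sister of the fiber edge lies in a horizontal slice), and extend by Schwarz reflection to a complete surface with one horizontal and $k$ vertical symmetry planes; this is precisely the paper's first step, which quotes \cite{MT2011} for the mirror-curve correspondence you re-derive from the sister relations. That part of your proposal is sound.

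The genuine gap is that you stop where the paper's proof is only half done, and you explicitly defer the remaining content as ``the main obstacle''. The paper's second half analyses the three mirror curves, and this analysis is what substantiates the geometric claims: parametrizing $\partial M(a,k)=c_1\cup c_2\cup c_3$ with the downward normal, it shows that $\tilde{c}_1$ is a graph with $\la\tilde{c}_1',\tilde{\nu}\ra\to-1$ in the end, so the vertical-plane mirror curve comes from $+\infty$ (this is what underwrites the claim of $k$ ends, which you assert only by counting copies of the ideal vertex); that the normal turns by $\pi/2$ along $\tilde{c}_2$; and, crucially, that the horizontal mirror curve $\tilde{c}_3$ is embedded. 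For the last point the paper uses that $M(a,k)$ is a section, so the normal twists monotonically along the fiber edge $c_3$, whence by \eqref{E:twist} the curvature of $\tilde{c}_3$ satisfies $\tilde{k}=2H-\alpha'<2H$; if $\tilde{c}_3$ had a self-intersection bounding a domain $\Omega$, Gauss--Bonnet, $\kappa\vol(\Omega)-\int\tilde{k}+\phi=2\pi$, would force the twist along the loop to exceed $\pi$, a contradiction. None of this appears in your proposal, and no substitute argument is offered. Note also that your framing of the missing step as a ``period problem'' is off target: since every boundary arc is a mirror curve and reflections are ambient isometries, the reflected copies close up automatically (the two vertical mirror planes intersect along the fiber through the sister of the hinge vertex, at angle $\pi/k$, which the isometric correspondence preserves). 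What actually has to be proved is the asymptotic behaviour and embeddedness of the mirror curves, and the tool for that is the twist formula \eqref{E:twist} combined with Gauss--Bonnet, as above.
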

\begin{proof}
By Daniel's correspondence \ref{t:correspondence} the minimal graph $M(a,k)\subset E(\kappa+4H^2,H)$ has a simply connected sister surface with cmc $H$ in the product manifold $\Sigma(\kappa)\times\R$, which is locally isometric. Since $M(a,k)$ is a graph the sister surface is a (multi-)graph. Moreover since $M(a,k)$ is bounded by horizontal and vertical geodesics, the sister surface is bounded by mirror curves in vertical and horizontal planes. Hence by Schwarz reflection about those planes we get a complete cmc surface $M_a$ consisting of $4k$ fundamental pieces with the claimed symmetry planes.

In order to understand the geometry of $M_a$ we analyse the mirror curves, which are the sister curves of $\partial M(a,k)$. We consider the downward pointing normal $\nu$ and parametrize $\partial M(a,k)=c$ such that the horizontal component is followed by the vertical geodesic. Let $\eta$ denote the conormal along $c$ sucht that $(c',\eta,\nu)$ is positively oriented and $c_1$, $c_2$, $c_3$ each geodesic component, where $c_3$ is vertical, i.e. $\la c_3',\eta\ra=1$.
The curve $c_1$ corresponds to a mirror curve $\tilde{c}_1$ in a vertical plane, since the conormal along $c_1$ is vertical only in the end, $\tilde{c}_1$ is a graph and $\la \tilde{c}'_1,\tilde{\nu}\ra\to -1$ in the end. Hence, the curve comes from $+\infty$. Along the finite horizontal curve $c_2$ the normal rotates about $\pi/2$ and so does the normal along $\tilde{c}_2$, moreover it is graph as $\tilde{c}_1$. Finally we consider the geodesic in fiber direction $c_3$, since the surface is graph, the normal rotates monotone. Moreover, since we have chosen the downward pointing normal and $\la c_3',\eta\ra=1$, the twist $\alpha$ is increasing and by Equation \eqref{E:twist} the curvature of the horizontal sister curve $\tilde{c}_3$ is $\tilde{k}=2H-\alpha'<2H$. We claim that $\tilde{c}_3$ is embedded. Assume the contrary, i.e. there exist $t_0,t_1$ such that $\tilde{c}_3(t_0)=\tilde{c}_3(t_1)$. We apply the Gau\ss{}-Bonnet theorem to the loop $\tilde{c}_3\vert_{[t_0,t_1]}$ which bounds a domain $\Omega$
\[
\kappa\vol{\Omega}-\int\tilde{k}+\phi=2\pi,\]
where $\phi\in[0,\pi]$ determines the angle $\measuredangle(c_3'(t_1),c_3'(t_0))$. The geodesic curvature is $-\tilde{k}$, since a loop cannot occur with the surface normal $\tilde{\nu}$ pointing inwards $\Omega$ by $\tilde{k}<2H$. But $\tilde{k}=2H-\alpha'$ implies \[\alpha\vert_{[t_0,t_1]}=2\pi-\phi+2H l(\partial\Omega)-\kappa\vol(\Omega)>\pi,\] which is a contradiction.
\end{proof}

\section{Constant mean curvature $2k$-noids}
We construct a $2$-parameter family of surfaces with cmc $H\in\left[0, 1/2\right]$ in $\Sigma(\kappa)\times\R$ with $2k$ ends and dihedral symmetry. Each surface has $k$ vertical symmetry planes and one horizontal one, where $k\geq 2$. The idea is to solve an improper Plateau problem of disc type in $E(\kappa+4H^2,H)$, where $\kappa+4H^2\leq0$, and the disc is bounded by geodesics. Its sister disc in $\Sigma(\kappa)\times\R$ generates an MC $H$ surface by reflections about horizontal and vertical planes.

For $\kappa=-1$ and $H=1/2$, it corresponds to a minimal surface in $\Nil_3(\R)=E\left(0,1/2\right)$
; if $0<H<1/2$ the MC$H$ surface results from a minimal surface in $\widetilde{\PSL}_2(\R)$ and finally for $H=0$ the surfaces are conjugate minimal surfaces in $\Sigma(\kappa)\times\R$, $\kappa\leq 0$.

\subsection{Boundary construction}

In $\Sigma(\kappa)\times\R$ the desired boundary is not connected. It consists of two components: The first component is a curve consisting of two mirror curves, each lying in a vertical plane. The two planes form an angle $\phi=\pi/k$, $k\geq2$. The second component is a mirror curve in a horizontal plane. 
\begin{figure}\begin{center}
\psfrag{a}{$\alpha$}
\psfrag{d}{$d$}
\psfrag{pr}{$\pi$}
\psfrag{p}{$\phi$}
\psfrag{phi}{$\phi$}
\psfrag{hatp}{$\hat{p}$}
\psfrag{p1}{$p_1$}
\psfrag{p1t}{$\tilde{p}_1$}
\psfrag{1}{$p_1$}
\psfrag{2}{$p_2$}
\psfrag{3}{$p_3$}
\psfrag{4}{$p_4$}
\psfrag{5}{$p_5$}
\psfrag{6}{$p_6$}
\psfrag{7}{$p_7$}
\psfrag{8}{$\hat{p}_1$}
\includegraphics[width=6cm]{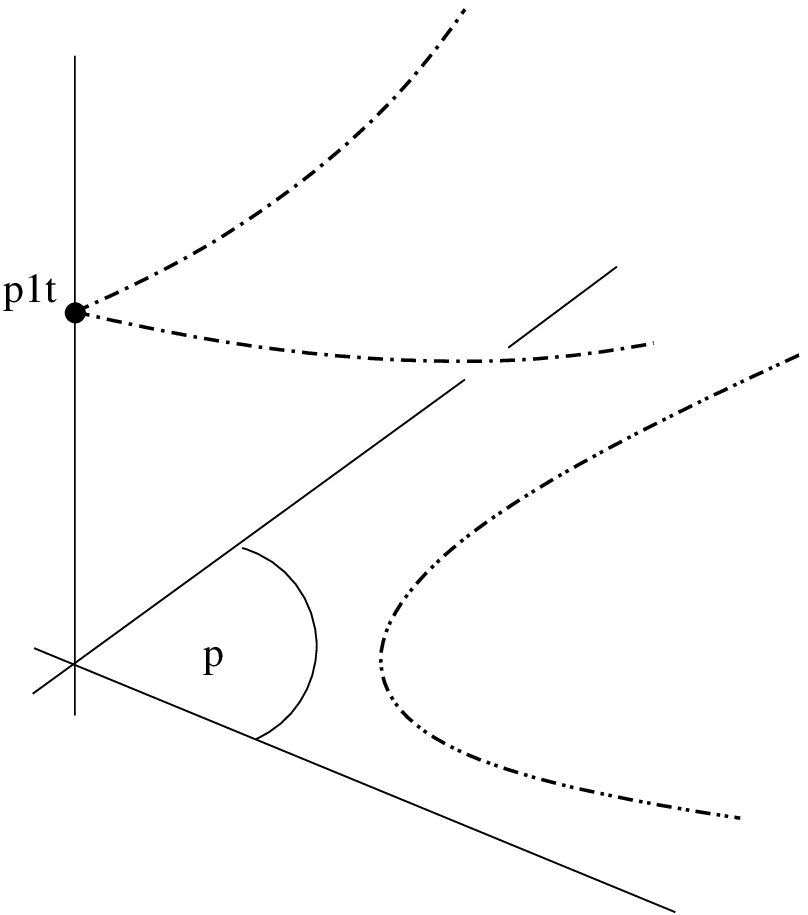}\qquad
\includegraphics[width=4.5cm]{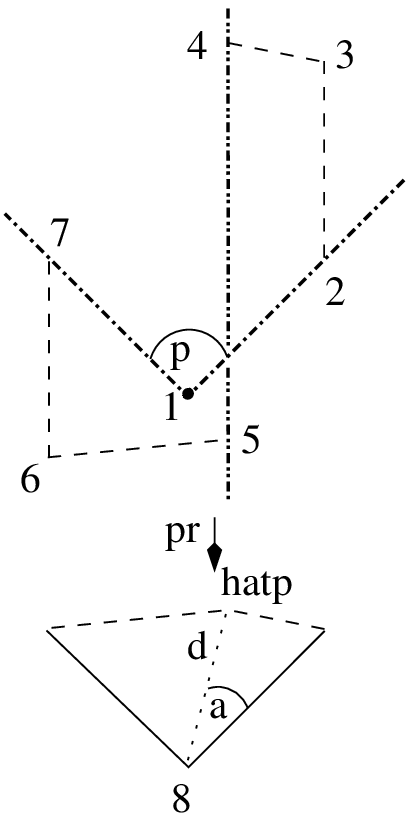}
\caption{Left: The desired boundary of the cmc surface in $\Sigma(\kappa)\times\R$. Right: The corresponding boundary of the minimal sister surface in $E(\kappa+4H^2,H)$. The single-dotted curve left corresponds to the single-dotted curve on the right.}\label{f:2knoid}\end{center}\end{figure}

In \cite{MT2011} the relation between mirror curves and their geodesic sisters is discussed. By this relation the sister surface in $E(\kappa+4H^2,H)$ is bounded by a geodesic contour $\Gamma\coloneqq\Gamma_{d,\alpha}$: The horizontal mirror curve corresponds to a vertical geodesic and the mirror curves in vertical planes are related to horizontal geodesics enclosing an angle $\pi/k$. The relative position of the vertical and horizontal components determines the $2k$-noid. 

The distance $d$ of the vertical geodesic to the vertex of the two horizontal geodesics is well-defined and realised by the length of a horizontal geodesic $\gamma$. Its length is equal to the length of its projection $\pi(\gamma)$ to $\Sigma(\kappa+4H^2)$, since it is a horizontal geodesic and the projection $\pi$ is a Riemannian fibration. The same holds for the angle $\alpha$ enclosed by $\gamma$ and one of the horizontal rays. Since the sister surfaces are isometric, it is consistent to call the $2$-parameter family of cmc surfaces which we will obtain $\widetilde{M}_{d,\alpha}$.

To construct a minimal surface that is bounded by $\Gamma$, we truncate the infinite contour $\Gamma$ and get Jordan curves $\Gamma_n$, $n>0$. To define $\Gamma_n$ we consider a geodesic quadrilateral $\Delta_n\coloneqq\Delta_n(d,\alpha)$ in $\Sigma(\kappa+4H^2)$:  Two edges of length $n$ form an angle $\pi/k$ and intersect in point $\hat{p}_1$. Furthermore, its diagonal in $\hat{p}_1$ has length $d$ and encloses an angle $\alpha\leq\phi/2$ to one side. Let $\hat{p}$ denote the endpoint of the diagonal. We consider the horizontal lift of $\Delta_n$ starting in $\hat{p}$ and going in positive direction. We label the endpoints with $\tilde{\Delta}_n(0)=p_5$ and $\tilde{\Delta}_n(l)=p_4$, by Lemma \ref{l:vertdistances} the signed vertical distance is $d(p_5,p_4)=2H\vol(\Delta_n)$ and therefore, in positive $\xi$-direction. Now we translate the horizontal edge that ends in $p_4$ in positive $\xi$-direction by $n$ and call the endpoint $p_3$. Furthermore, we translate the horizontal edge that starts in $p_5$ in $-\xi$-direction by $n$ and call the endpoint $p_6$. After the vertical translation we have
\[d(p_5,p_4)=2H\vol(\Delta_n)+2n.\]
Hence, $p_5p_4$ is in positive $\xi$-direction. We complete the Jordan curve $\Gamma_n$ by adding two vertical edges of length $n$ in $p_3$ and $p_6$ and label the intersections with the horizontal edges $p_2$ and $p_7$ respectively. See Figure \ref{f:2knoid}.

The polygon $\Gamma_n$ has six right angles and one angle $\phi=\pi/k$; the quadrilateral $\Delta_n$ is not necessarily convex for large $n$.

For $n\to\infty$ we have $\Gamma_n\to\Gamma$; this contour can be constructed by the union of quadrilaterals $$\Delta\coloneqq\Delta(d,\alpha)=\bigcup_{n>0}\Delta_n.$$ Note that $\Delta$ is non-convex.

\subsection{Plateau solutions}\label{SS:Plateau}

The idea is to consider the Plateau solutions for $\Gamma_n$ and to take their limit for $n\to\infty$. We show that there exists a domain $\Omega_n$ with mean convex boundary, such that $\Gamma_n\subset\partial\Omega_n$. A Riemannian manifold $N$ with boundary is mean convex if the boundary $\partial N$ is piecewise smooth, each smooth subsurface of $\partial N$ has non-negative mean curvature with respect to the inward normal, and there exists a Riemannian manifold $N'$ such that $N$ is isometric to a submanifold of $N'$ and each smooth subsurface $S$ of $\partial N$ extends to a smooth embedded surface $S'$ in $N'$ such that $S'\cap N=S$. We call each surface $S$ a barrier. By \cite{HS1988} the solution of the Plateau problem for $\Gamma_n$ is an embedded minimal disc. In the last step it is shown that the sequence of minimal discs has a limiting minimal disc with boundary $\Gamma$.

\begin{proposition}
The special Jordan curve $\Gamma_n\subset E(\kappa+4H^2,H)$ bounds a Plateau solution $M_n\subset E(\kappa+4H^2,H)$ for large $n\in\N$, which extends without branch points by Schwarz reflection about the edges of $\Gamma_n$.
\end{proposition}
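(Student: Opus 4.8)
The plan is to mirror the strategy of Theorem~\ref{T:knoid}, but the decisive new difficulty is that the quadrilateral $\Delta_n$ fails to be convex for large $n$, so the preimage $\Omega_n\coloneqq\pi^{-1}(\Delta_n)$ is \emph{not} automatically mean convex and one cannot simply invoke the graph theory used before. First I would establish the Plateau solution itself: having arranged in the boundary construction that $\Gamma_n$ is a genuine Jordan curve bounding $\Omega_n$, I would verify that $\Omega_n$ is mean convex in the precise sense defined just above the statement, i.e. exhibit a barrier surface through each smooth face of $\partial\Omega_n$. The vertical faces $\pi^{-1}(\text{edge})$ are vertical planes, which are minimal by the discussion in Section~\ref{SS:umbrella}, so they serve as their own barriers; the two horizontal faces lie in horizontal slices, again minimal reference surfaces. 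With mean convexity in hand, \cite{HS1988} yields an embedded minimal disc $M_n$ spanning $\Gamma_n$.

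The main obstacle is the non-convexity of $\Delta_n$, and I expect it to enter in two places. The first is mean convexity of $\Omega_n$: along the reflex part of $\partial\Delta_n$ the inward normal of the vertical face points in the ``wrong'' direction, so I would have to check carefully that each smooth face still extends to an embedded surface $S'$ with $S'\cap\Omega_n=S$ and has nonnegative mean curvature with respect to the inward normal of $\Omega_n$ — for the totally geodesic vertical planes the mean curvature is zero regardless of orientation, which is exactly why vertical planes are the natural barrier here and why the construction can tolerate a non-convex base. The second place is that, unlike in Theorem~\ref{T:knoid}, $M_n$ need not be a section/graph over $\Delta_n$, so I cannot rely on the uniqueness and graph arguments of \cite{plehnert2013}; instead embeddedness of the \emph{disc} from \cite{HS1988} is what I would carry forward.

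Next I would treat the absence of branch points. Interior regularity and boundary regularity away from the vertices give smoothness, and by \cite{GL1973} a boundary point that is not a vertex is not a branch point. At a vertex the angle is either a right angle (six of them) or the angle $\phi=\pi/k$, hence always of the form $\pi/n$ with $n\ge2$; as in the proof of Theorem~\ref{T:knoid}, successive Schwarz reflections about the two edges meeting at such a vertex produce $2n$ congruent copies sharing a common barrier, the reflected configuration is smooth and has continuously differentiable boundary through the vertex, which reduces the vertex to the non-vertex case and rules out branching there. Thus $M_n$ extends smoothly by Schwarz reflection about every edge of $\Gamma_n$.

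The final step, the passage $n\to\infty$ to the improper Plateau solution bounded by $\Gamma$, I would defer to the subsequent limiting argument (the last step announced before Section~\ref{SS:Plateau}); the proposition as stated only asserts the existence and smooth reflective extension of the compact solution $M_n$ for large $n$, so it is complete once mean convexity, embeddedness, and the vertex analysis above are in place. The phrase ``for large $n$'' I read as allowing me to discard the finitely many small $n$ for which the translated edges might fail to produce an embedded $\Gamma_n$, so I would only need the boundary configuration to stabilise once $n$ is large enough that $d(p_5,p_4)=2H\vol(\Delta_n)+2n$ is comfortably positive, which the construction already guarantees.
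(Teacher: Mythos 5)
There is a genuine gap, and it sits exactly at the point you flagged and then argued away. Your domain is $\Omega_n=\pi^{-1}(\Delta_n)$ (capped above and below), with the vertical planes $\pi^{-1}(\text{edge})$ serving as their own barriers. But mean convexity in the sense defined before the proposition is not only a sign condition on the mean curvature of each face: it requires that each smooth face $S$ extend to a smooth embedded surface $S'$ with $S'\cap N=S$. Along the fiber $\pi^{-1}(\hat{p})$ --- the fiber containing the boundary edge $\overline{p_4p_5}$, lying over the reflex vertex of the non-convex quadrilateral $\Delta_n$ --- this fails for the two adjacent vertical faces: the natural smooth extension of either face is the full vertical plane, and since the interior angle of $\Delta_n$ at $\hat{p}$ exceeds $\pi$ for large $n$, that plane re-enters $\pi^{-1}(\Delta_n)$, so $S'\cap N\neq S$; any extension bent so as to avoid re-entering would no longer have non-negative mean curvature with respect to the inward normal. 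Equivalently, the tangent cone of your $\Omega_n$ along this fiber has opening angle greater than $\pi$, so no barrier exists there, the flatness of vertical planes ``regardless of orientation'' notwithstanding. Hence \cite{HS1988} cannot be invoked for your $\Omega_n$ and the existence of $M_n$ does not follow. This obstruction is precisely why the paper does not use $\pi^{-1}(\Delta_n)$, and is echoed in Remark \ref{r:vertdis}.

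The paper's proof is devoted almost entirely to overcoming this. Its $\Omega_n$ is the intersection of five regions: a horizontal slab bounded by the umbrellas $U_4$, $U_5$; two vertical halfspaces only along the convex edges $\overline{p_1p_2}$ and $\overline{p_1p_7}$; and, crucially, the region sandwiched between two copies $S_\pm$ of fundamental pieces of the minimal $2k$-noid of Subsection \ref{S:knoid}, rotated about the axis $\overline{p_4p_5}$ by $\pm\delta$ and translated vertically by $h_\pm$, completed by pieces of horizontal umbrellas. The decisive step is the estimate that at every point of the fiber the opening angle of the tangent cone of this sandwich satisfies $\psi=\beta_++\beta_-+\delta<\pi$, achieved by taking $h_+$ small so that $\beta_+\to0$; only then is $\Omega_n$ mean convex with $\Gamma_n\subset\partial\Omega_n$, and \cite{HS1988} applicable. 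Your treatment of branch points (via \cite{GL1973} and the vertex-angle reflection argument of Theorem \ref{T:knoid}) is consistent with what the paper intends, but without a correct mean convex domain the proposal never produces the Plateau solution whose regularity you then discuss.
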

\begin{proof}
We define the domain $\Omega_n$ with mean convex boundary as the intersection of five domains: two of them have horizontal umbrellas as boundaries, two have vertical planes as boundaries and the last domain has four fundamental pieces of the minimal $k$-noids from \ref{S:knoid} in its boundary:

\begin{enumerate}
\item Take the halfspaces above the horizontal umbrella $U_5$ in $p_5$ and below the horizontal umbrella $U_4$ in $p_4$, see Subsection \ref{SS:umbrella} for the definition. Below resp. above means in negative resp. positive $\xi$-direction. We call the intersection of the two halfspaces a {\it horizontal slab}. The umbrellas are with respect to the same fiber and therefore are parallel sections with vertical distance $d(U_5,U_4)=d(p_5,p_4)$. If $U_{4/5}\cap\Gamma_n\setminus\{\overline{p_{4/5}p_{3/6}}\}\ne\emptyset$ we redefine $\Gamma_n$ by translating $p_{4/5}$ in $\pm\xi$ direction by factor $c_{4/5}$. Since $U_{4/5}$ are sections, they are graphs above/below the horizontal geodesics $\overline{p_1p_2}$ and $\overline{p_1p_7}$. Therefore we find constants $c_{4/5}>0$ such that the new boundary curve, we call it again $\Gamma_n$ does not intersect the umbrellas except for $\overline{p_4p_3}$ and $\overline{p_5p_6}$. The horizontal slab is a barrier for $\Gamma_n$, for all $n\in\N$.

\item Furthermore, consider the vertical halfspaces defined by the horizontal arcs $\overline{p_1p_2}$ and $\overline{p_1p_7}$, such that $\Gamma_n$ lies inside.

\item The last domain is based on the minimal surface from \ref{S:knoid}: The idea is to consider a mean convex set sandwiched between two copies of a symmetric minimal $2k$-noid piece. We claim that we can orient them and choose their parameters, the necksize $a$ and the number of ends, such that they are barriers for $\Gamma_n$. Their position relative to $\Gamma_n$ is given by a rotation angle $\delta$ and the vertical distances $h_\pm$ from the horizontal umbrella $U$ in $p_1$. We call the vertical distance to $U$ {\it height}.

We take four fundamental patches $M_{d,2k}$ from Subsection \ref{S:knoid} and orient them, such that their axes coincide with $\overline{p_4 p_5}$. We require their horizontal boundaries to lie in the horizontal umbrella $U$. By rotation about $\overline{p_4 p_5}$ by an small angle $\pm \delta$  followed by vertical translations by $h_\pm$ we construct a mean convex domain $S$. Each minimal surface $S_\pm$ consists of two fundamental domains generated by Schwarz reflection about the bounded horizontal edge.

\begin{figure}
\begin{center}
\psfrag{d}{$\delta$}
\psfrag{1}{$\pi(p_1)$}
\psfrag{2}{$\pi(p_{2/3})$}
\psfrag{3}{$\pi(p_{4/5})$}
\psfrag{4}{$\pi(p_{6/7})$}
\psfrag{h}{$\mathbf{h_+}$}
\psfrag{0}{$\mathbf{0}$}
\psfrag{i}{$\mathbf{-\infty}$}
\psfrag{p}{$\mathbf{-(n+c_5)}$}
\psfrag{m}{$\mathbf{n+c_4}$}
\psfrag{k}{$\mathbf{\infty}$}
\psfrag{j}{$\mathbf{h_-}$}
\includegraphics[width=6cm]{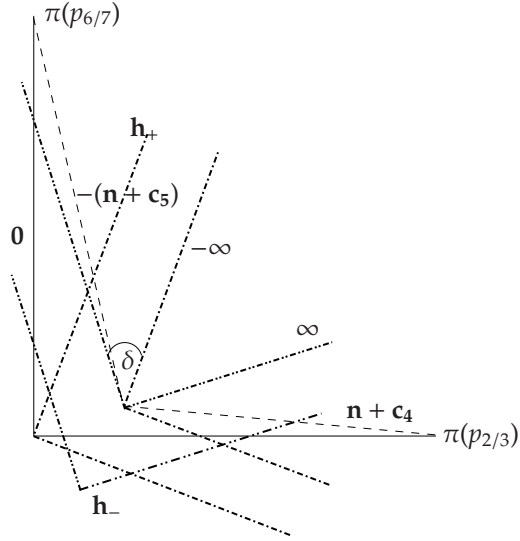}\caption{The projection of the three boundaries of $M_n$, $S_+$ (single-dotted) and $S_-$ (double-dotted) to $\Sigma(\kappa+4H^2)$. The heights of the horizontal geodesics are printed in bold letters. Here $k=2$ and $\kappa+4H^2=0$.}
\end{center}\end{figure}

We define $S_+$ first: We choose an orientation such that the projection of its horizontal edge of length $d$ coincides with the diagonal of the quadrilateral in the projection. Afterwards we translate the surface in $\xi$-direction by $h_+$. There exists $N\in\N$ such that for all $n\geq N$ the surface $S_+$ does not intersect $\Gamma_n$ for all $h_+>0$, because $S_+$ is graph above the projection of the horizontal edges of $\Gamma_n$. In the projection the horizontal hinge of $\Gamma_n$ encloses an angle $\delta\coloneqq\phi/2-\alpha>0$ with the horizontal hinge of $\partial S_+$.

To define $S_-$ we rotate the other copy of two fundamental patches $M_{d,2k}$ from Subsection \ref{S:knoid} about $\overline{p_4p_5}$ such that in the projection the two edges of length $d$ enclose an angle $\delta$. Afterwards we translate the surface by $h_-$ in $-\xi$-direction. The surface $S_-$ is a graph below a bounded component of $\overline{p_1p_2}$, therefore exists $h_->0$ such that $S_-\cap \overline{p_1p_2}=\emptyset$. Furthermore, it is a graph below $\overline{p_1p_7}$, where the surface lies below the horizontal umbrella at height $h_-$. The other edges of $\Gamma_n$ are uncritical for $n$ large enough.

\begin{figure}
\begin{center}
\psfrag{d}{$\delta$}
\psfrag{+}{$\beta_+$}
\psfrag{-}{$\beta_-$}
\includegraphics[width=6cm]{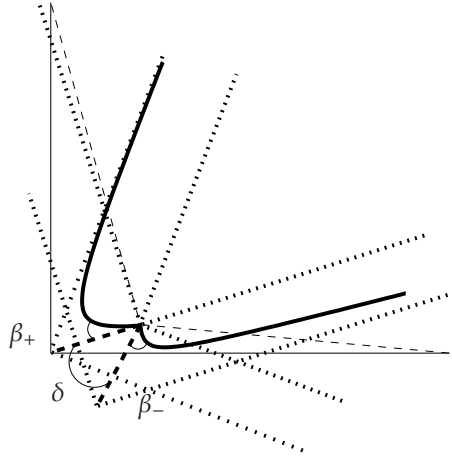}\caption{Level curves: The solid lines sketch the intersection of the horizontal umbrella $U$ with $M_n$ and the symmetric $2k$-noids (bold). The dashed lines indicate the remaining boundaries. Here $k=2$.}\label{f:level0}
\end{center}
\end{figure}

It remains to show that for every $p\in p_4 p_5$ the opening angle $\psi$ of the tangent cone $T_p C$ of $S$ is less than $\pi$. This is clear for $p$ at height $\abs{h}> \max\{h_\pm\}$. The angle has its maximum $\psi(h_+,h_-)$ in height $(h_+-h_-)/2$, it depends on $h_+$ and $h_-$ and is bounded by $\psi_{\sup}\coloneqq 2(\pi-\phi-\epsilon)+\delta$ where $\epsilon\geq0$ denotes the defect depending on $\Sigma(\kappa+4H^2)$. We consider the level curves of $S_+$ and $S_-$ in height $h$. The level curves define angles $\beta_+(h)$ and $\beta_-(h)$ given in the projection by the angle of the projected conormal in height $h$ of $S_\pm$ and the edge of length $d$ of the corresponding surface. We know $0\leq\beta_\pm<\pi-\pi/(2k)-\epsilon$. Therefore, we have \[\psi(h_+,h_-)=\beta_+\left(\frac{h_+-h_-}{2}\right)+\beta_-\left(\frac{h_+-h_-}{2}\right)+\delta.\] But $h_+$ was chosen independently of $n$, $\delta$ and $h_-$, moreover for $h_+\to 0$ we have $\beta_+\left((h_+-h_-)/2\right)\to0$. Hence we conclude 
\[\psi(h_+,h_-)\to\beta_-\left(\frac{h_+-h_-}{2}\right)+\delta<\pi-\epsilon\leq\pi.\]

We summarise: $\Gamma_n$ lies in-between two copies of a cmc $2k$-noid, i.e. there exists $N\in\N$ such that $(S_+\cup S_-)\cap \Gamma_n=p_4 p_5$ for $n\geq N$.

We complete the last barrier by subsets of the two horizontal umbrellas at heights $h_\pm$ given by the edges of the symmetric $2k$-noids; it defines the boundary of the halfspace $S$.
\end{enumerate}

We define the mean convex domain $\Omega_n$ as the intersection of the five halfspaces. Since $\Gamma_n$ lies in the boundary of a mean convex domain, the existence of an embedded minimal surface $M_n$ of disc-type with boundary $\Gamma_n$ follows from \cite{HS1988}.
\end{proof}

\begin{remark}\label{r:vertdis}
The definition of $\Omega_n$ would be more direct if we could define it as the intersection of halfspaces. A vertical plane/horizontal umbrella separates $E(\kappa+4H^2,H)$ into two connected components, but two fundamental patches of a minimal $k$-noid from Subsection \ref{S:knoid} do not separate $E(\kappa+4H^2,H)$ in two connected components. This is because of the normal turning along the vertical geodesic. To get several connected components, we have to use $S_+\cup S_-$, but their boundary would not be smooth anymore.
\end{remark}

\begin{lemma}
The Plateau solution $M_n$ is a section over a simply connected domain $\Delta_n$ enclosed by $\partial\Delta_n\coloneqq\pi(\Gamma_n)$ and unique among all Plateau solutions with the prescribed boundary values for each $n\in\N$.
\end{lemma}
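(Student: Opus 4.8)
The plan is to prove the statement in two stages: first that the embedded Plateau solution $M_n$ produced in the preceding Proposition is a section over $\Delta_n$, and then that this forces the claimed uniqueness. Throughout I use that $M_n$ is a compact embedded minimal disc without branch points, contained in the mean convex domain $\Omega_n\subset\pi^{-1}(\Delta_n)$, and that by the Proposition it extends smoothly by $\pi$-rotation (Schwarz reflection) about each geodesic edge of $\Gamma_n$.

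\emph{Stage 1 (section property).} Since the vertical translations $\phi_t$ along the fibers form a one-parameter group of isometries of $E(\kappa+4H^2,H)$ preserving minimality, I first compare $M_n$ with the minimal foliation $\{\phi_t(U)\}_{t\in\R}$ by vertical translates of a horizontal umbrella (a section for $\kappa+4H^2\le 0$); by the maximum principle this confines $M_n$ to the compact vertical range of its boundary $\Gamma_n$. I would then run the Alexandrov sliding argument: for $t$ large $\phi_t(M_n)\cap M_n=\emptyset$, and I decrease $t$ to a putative first contact at $t_0>0$. An interior contact point forces $\phi_{t_0}(M_n)=M_n$ by the interior maximum principle, impossible as $M_n$ is compact; a contact point on a horizontal edge is excluded because those edges move strictly upward under $\phi_{t_0}$; and a contact point on a vertical edge is reduced to the interior case by passing to the Schwarz-reflected surface, across which the vertical geodesic becomes interior. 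Hence $\phi_t(M_n)\cap M_n=\emptyset$ for all $t\neq 0$, so $\pi|_{M_n}$ is injective; together with analyticity and the absence of branch points this makes $\pi|_{M_n}$ a local diffeomorphism in the interior, i.e. $\la\nu,\xi\ra\neq 0$ there. Since $\partial M_n=\Gamma_n$ projects with degree one onto the Jordan curve $\partial\Delta_n$ and both $M_n$ and $\Delta_n$ are discs, $\pi|_{M_n}$ is a diffeomorphism onto the simply connected $\Delta_n$; that is, $M_n$ is a section.

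\emph{Stage 2 (uniqueness).} Being a section, $M_n$ is the graph of a solution $u_n$ of the minimal surface equation \eqref{E:MCE} over $\Delta_n$, with the boundary values prescribed by $\Gamma_n$ (finite continuous data on the open edges, the jumps at the vertices being absorbed by the vertical edges). This equation is quasilinear elliptic and obeys the comparison principle, so the Dirichlet problem admits at most one solution; hence $M_n$ is the unique section with these data. To upgrade this to uniqueness among \emph{all} Plateau solutions, I would observe that the sliding argument of Stage 1 applies verbatim to any embedded minimal disc with boundary $\Gamma_n$ lying in $\Omega_n$ — the only inputs were the mean convexity of $\Omega_n$, compactness, and the geodesic edges — so every such solution is a section and therefore coincides with $u_n$.

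The main obstacle is Stage 1, and within it precisely the non-convexity of $\Delta_n$ together with the presence of vertical boundary edges: classical Rado-type arguments are unavailable because $\partial\Delta_n$ is not convex, and the naive sliding fails at the vertical edges, where $\phi_t(\Gamma_n)$ and $\Gamma_n$ overlap along a common fiber. The resolution on which the whole argument hinges is the use of Schwarz reflection across the geodesic boundary — converting the delicate boundary maximum principle on the vertical edges into the interior maximum principle — together with the barrier domain $\Omega_n$ constructed in the Proposition, which substitutes mean convexity for convexity.
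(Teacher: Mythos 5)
Your approach (Alexandrov sliding by vertical translations) is genuinely different from the paper's, and it has a gap that your proposed repair does not close. The contour $\Gamma_n$ contains three fiber segments (the vertical edges $\overline{p_2p_3}$, $\overline{p_4p_5}$, $\overline{p_6p_7}$), and a vertical translation $\phi_t$ maps every fiber into itself. Hence $\phi_t(\Gamma_n)\cap\Gamma_n\neq\emptyset$ for all $0<t<L$, where $L$ is the length of the longest vertical edge, so your target conclusion ``$\phi_t(M_n)\cap M_n=\emptyset$ for all $t\neq0$'' is false, and $\pi$ restricted to $\overline{M_n}$ is not injective anyway (each vertical edge projects to a single vertex of $\partial\Delta_n$); ``section'' can only mean that the surface is a graph away from those fiber segments. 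If you retreat to that interior statement, the first-contact analysis necessarily terminates at a \emph{boundary} contact: the first contact time is at least $L$, and at $t=L$ the translate touches $M_n$ at the vertex $p_4=\phi_L(p_5)$, a vertex-to-vertex contact not covered by your three cases and from which no contradiction can be extracted -- it genuinely occurs for the true Plateau solution, so any argument deriving a contradiction from it would be unsound. The reflection trick fails for a structural reason: after Schwarz reflection across a vertical edge, the doubled surface contains the whole fiber segment in its interior, and so does its vertical translate; the two doubled surfaces therefore intersect along a common sub-segment of that fiber for every $0<t<L$. Two minimal surfaces meeting along a curve generically cross transversally there, and the interior maximum principle yields nothing unless the contact is tangential and one-sided. (Your dismissal of horizontal-edge contacts is also insufficient: a boundary point of the translated surface can touch the interior of $M_n$ non-tangentially, T-wise, without violating disjointness for larger $t$.) Since Stage 2 presupposes Stage 1 -- both for writing $M_n$ as a graph and for upgrading uniqueness from sections to all Plateau solutions -- it collapses with it.

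This degeneracy of sliding in the fiber direction is precisely what the paper's proof is built to avoid. Instead of moving $M_n$, the paper supposes $M_n$ has a vertical tangent plane $V$ at some point $p$, observes that $M_n\cap V$ is then a union of analytic arcs with at least four endpoints on $\Gamma_n$ and no loops (by the maximum principle applied to the precompact component a loop would bound), and then uses the geometry of the mean convex domain: the connected component $V_p$ of $\Omega_n\cap V$ containing $p$ meets $\Gamma_n$ in at most two components, which forces two of the arcs to close up into a loop in $V\cup\overline{M}_n$ -- a contradiction. Absence of vertical tangent planes plus Lemma \ref{l:section} (patching local inverses of $\pi$) gives the section property, and uniqueness then follows, as in your Stage 2, from the maximum principle for the resulting quasilinear elliptic equation. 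If you want to salvage a moving-surface argument, you would need isometries that displace the vertical edges off themselves (horizontal motions), and those run directly into the non-convexity of $\Delta_n$; the paper's counting argument is the workable route here.
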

\begin{proof}
We show that $M_n$ does not have any vertical tangent planes. Then Lemma \ref{l:section} below implies that $M_n$ is a section.

Suppose there exists a vertical plane $V$ that is tangent to $M_n$ at some $p\in M_n$. We consider the intersection $V\cap\overline{M_n}$: Since $M_n$ and $V$ are both minimal but not identical, their intersection $M_n\cap V$ is a union of analytic curves ending on $\partial M_n=\Gamma_n$. At $p$ at least two of them meet. Assume two curves crossing at $p$ extend to a loop $\gamma\subset M_n\cap V$, then the precompact component of $M_n\setminus\gamma$ would coincide with $V$. By the maximum principle then $M_n\equiv V$, which is impossible since $\Gamma_n\nsubseteq V$. Therefore, $M_n\cap V$ cannot contain a loop and the analytic curves have at least four endpoints on $\Gamma_n$. 

Let us consider the intersection of the vertical plane with the mean convex domain $\Omega_n\cap V$. It might consist of more than one connected component. For the connected component $V_p$ containing $p$, we know that $\Gamma_n\cap V_p$ has at most two connected components. So at least two endpoints of $M_n\cap V$ form a loop in $V\cup\overline{M}_n$, which by assumption is excluded. Therefore, we get a contradiction, i.e. there exists no vertical tangent plane. 

Since the minimal surface $M_n$ is a section $s\colon\Delta_n\to E$, it is the solution of an elliptic partial differential equation. The uniqueness follows from the maximum principle for elliptic partial differential equations.
\end{proof}

\begin{lemma}\label{l:section}
Let $\Delta\coloneqq\pi(M)$ be a compact disc. If $M$ does not have any vertical tangent planes, then $M$ is a section over $\Delta$.
\end{lemma}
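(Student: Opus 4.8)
The plan is to show that the fiber projection restricts on $M$ to a one-sheeted covering, so that its inverse is the sought section; minimality plays no role, the statement being purely topological. First I would reformulate the hypothesis infinitesimally. Since the fibers are the integral curves of $\xi$ and $\ker\diff\pi=\R\xi$, a tangent plane $\T_pM$ is vertical precisely when it contains $\xi$. Thus \emph{no vertical tangent planes} is equivalent to $\diff\pi$ being injective on every $\T_pM$; as $\dim M=\dim\Sigma(\kappa+4H^2)=2$, this forces $\pi|_M$ to be a local diffeomorphism. Equivalently, $M$ is transverse to every fiber and meets it in a discrete set, which is finite because $M$ is compact.

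Next I would pass to interiors in order to apply covering-space theory cleanly. Using that the boundary is carried into $\partial\Delta$ by $\pi$ — the vertical boundary geodesics collapse onto the vertices and the horizontal ones map onto the boundary edges — one obtains $(\pi|_M)^{-1}(\Delta^{\circ})=M^{\circ}$, so $\pi$ restricts to a map $M^{\circ}\to\Delta^{\circ}$ between open discs. Since $M$ is compact this restriction is proper, and a proper local diffeomorphism onto a connected manifold is a covering map; hence $\pi|_{M^{\circ}}\colon M^{\circ}\to\Delta^{\circ}$ is a covering. Because $\Delta$ is a disc, $\Delta^{\circ}$ is simply connected and the covering is trivial, and because $M^{\circ}$ is connected it has a single sheet, so $\pi|_{M^{\circ}}$ is a diffeomorphism. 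Its inverse $s=(\pi|_{M^{\circ}})^{-1}\colon\Delta^{\circ}\to E$ is then a section transverse to the fibers, whose closure is $M$; extending $s$ by continuity over $\partial\Delta$ exhibits $M$ as a section over $\Delta$.

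The main obstacle is the boundary. Along the vertical edges of $\partial M$ the projection $\pi$ collapses an entire segment to a single vertex of $\Delta$, so $\pi|_M$ is there neither injective nor a local diffeomorphism, and the covering argument cannot be run on all of $M$. The delicate points are therefore, first, to verify $\pi(\partial M)\subseteq\partial\Delta$ from the geodesic structure of $\partial M$, so that no interior point of $M$ lies over $\partial\Delta$ and the restriction to interiors is legitimate; and second, to check that the single-sheeted graph extends continuously to the corners in spite of this collapsing. One must also use that $M$ is connected to exclude a multigraph, which for the surfaces at hand holds since they are Plateau discs.
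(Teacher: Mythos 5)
Your core argument is correct, and it is a genuinely different implementation of the same underlying idea as the paper. The paper's proof is shorter and more elementary: it applies the inverse function theorem at each point of $M$ to get local sections $s_p\colon U_p\to E$, takes a finite subcover of the compact disc $\Delta$, \emph{asserts} that the local inverses coincide on overlaps $U_{p_k}\cap U_{p_l}\ne\emptyset$, and glues them to a global section. Your covering-space argument is precisely the justification of that asserted step: the coincidence of local inverses on overlaps is equivalent to single-sheetedness of $\pi|_M$, which does not follow from the inverse function theorem alone but needs exactly the global input you supply (properness of a local diffeomorphism gives a covering; simple connectedness of the disc plus connectedness of $M$ gives one sheet). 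So your route buys rigor where the paper is terse, and it correctly isolates the hypothesis the paper never states but uses: connectedness of $M$ (two disjoint parallel graphs satisfy every other hypothesis and are not a section).

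One caveat: your boundary discussion imports structure that is not in the lemma and is in tension with its hypothesis. If $\partial M$ contained a vertical geodesic arc, the tangent planes along that arc would contain $\xi$ and hence be vertical, so under the stated hypothesis (no vertical tangent planes on all of $M$) there are no collapsing edges, $\pi|_M$ is a local diffeomorphism up to the boundary, and the detour through interiors is not forced. The collapsing you describe belongs to the \emph{application} of the lemma to the Plateau solutions $M_n$, whose contours $\Gamma_n$ do have vertical edges; there the hypothesis can only hold for interior tangent planes and the conclusion must be read as ``$M$ is the closure of a section.'' Your proposal flags this but does not resolve it: the inclusion $\pi(\partial M)\subseteq\partial\Delta$ and the continuous extension of $s$ over $\partial\Delta$ (in particular at the vertices below the vertical edges) are announced as delicate and then left open. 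That is a real gap in your write-up as a self-contained proof, but it is a gap shared with the paper, whose own proof ignores the boundary entirely.
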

\begin{proof}
Since there is no vertical tangent plane we have $\diff \pi v\ne0$ for all $v\in \T _p M$ and any $p\in M$. By the inverse mapping theorem there exists a neighbourhood $U_p\subset\Delta$ and a continuous map $s_p\colon U_p\to E$ such that $\pi(s_p(x))=x$ for all $x\in U_p$. Moreover, there exists a finite covering $\{U_{p_n}\}_n$ of $\Delta$ and the inverse maps coincide for $U_{p_k}\cap U_{p_l}\ne\emptyset$. Therefore, we get a continuous map $s:\Delta\to E$ such that $\pi(s(x))=x$ for all $x\in \Delta$.
\end{proof}

Now we can take the limit $n\to\infty$:
\begin{theorem}
There exists a minimal surface $M_\infty \subset E(\kappa+4H^2,H),\,\kappa+4H^2\leq0$ which is a section over $\Delta$ and extends without branch points by Schwarz reflection across its edges.
\end{theorem}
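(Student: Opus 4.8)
The plan is to obtain $M_\infty$ as the limit of the sections $u_n$ whose existence and uniqueness were established in the two preceding lemmas, following the scheme of the proof of Theorem~\ref{T:knoid}. Since $\Delta=\bigcup_{n>0}\Delta_n$ with $\Delta_n\subset\Delta_{n+1}$, and since along the two vertical edges over $p_3$ and $p_6$ the boundary heights of $\Gamma_n$ tend to $+\infty$ and $-\infty$ respectively while the remaining boundary data stabilise, the maximum principle for the elliptic section equation \eqref{E:MCE} yields monotonicity of $(u_n)_{n\ge m}$ on each fixed $\Delta_m$, exactly as in the $k$-noid case. It therefore suffices to bound the sequence uniformly on compact subsets $K\subset\Delta$ and to produce a gradient estimate; the limit $u_\infty$ then solves \eqref{E:MCE} and is a section over $\Delta$ by Lemma~\ref{l:section}.

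For the uniform bound I would reuse the reference surfaces of Section~\ref{S:refsurfaces}. Fix $K\subset\Delta$ and $m$ with $K\subset\Delta_m$. When $\kappa+4H^2<0$, each geodesic edge of $\partial\Delta_m$ carries a Scherk-type minimal graph coming from Equation~\eqref{E:Scherk}; translating such a graph vertically until it lies above (resp.\ below) $u_m$ and invoking the maximum principle bounds $u_n$ on $K$ from above and below, uniformly in $n\ge m$. When $\kappa+4H^2=0$ the ambient space is $\Nil_3$ and I would instead use the horizontal helicoid of~\cite{DH2009} as in~\cite{plehnert2013}. Combined with the interior gradient estimate of~\cite{RST2010} this gives uniform $C^1$-bounds on $K$, hence $C^{2,\alpha}$-bounds by elliptic theory; a diagonal/Arzel\`a--Ascoli argument together with the monotonicity then produces the limit section $M_\infty$ over all of $\Delta$.

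It remains to check that $M_\infty$ attains the full contour $\Gamma$ and extends without branch points. The boundary is reached by the same barrier construction: near the finite edges the Scherk (resp.\ helicoid) barriers pin $u_\infty$ to the prescribed values, while near the two edges carrying $\pm\infty$ data the monotone convergence forces $u_\infty$ to diverge there, so that $\partial M_\infty=\Gamma$. For the reflection I would argue exactly as in Theorem~\ref{T:knoid}: a boundary point $p$ that is not a vertex of $\Gamma$ is no branch point by~\cite{GL1973}; if $p$ is a vertex the enclosed angle is of the form $\pi/n$ with $n\ge2$, so finitely many successively reflected copies of $M_\infty$ have a barrier by construction, and since the boundary is $C^1$ at $p$ this reduces to the non-vertex case.

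The main obstacle I anticipate is precisely the non-convexity of $\Delta$ together with the mixed $\pm\infty$ boundary data. Because the mean convex domain $\Omega_n$ from the Plateau step grows with $n$, it does not by itself control the limit; the essential point is to produce the $n$-independent Scherk/helicoid barriers over the geodesic edges of $\Delta$ and to verify that the monotone limit stays \emph{finite} on the whole of the non-convex $\Delta$ --- a Jenkins--Serrin type finiteness available in this setting via~\cite{younes2010} --- so that $M_\infty$ is a genuine section over $\Delta$ rather than over a proper subdomain.
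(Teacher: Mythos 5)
Your overall scheme---monotone limit of the Plateau sections, local uniform bounds via barriers, the gradient estimate of \cite{RST2010}, diagonalization, and the branch-point discussion via \cite{GL1973}---is the same as the paper's, but your choice of barrier has a genuine gap, and it sits exactly at the non-convexity you flag in your last paragraph. A Scherk-type graph \eqref{E:Scherk} is a section over a \emph{half-plane} bounded by a complete geodesic, equal to $+\infty$ on that geodesic and finite elsewhere. In the $k$-noid case (Theorem \ref{T:knoid}) this suffices because $\Delta_r$ is convex: for $K\subset\Delta_n$ the only edge of $\partial\Delta_n$ on which the values of $u_r$, $r>n$, are unknown is $\gamma_n$, and the Scherk graph over $\gamma_n$ is $+\infty$ precisely there, so the boundary comparison closes up. Here both features fail. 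First, for $n>m$ there are \emph{two} edges of $\partial\Delta_m$ (the ones meeting at $\hat{p}$) that are interior to $\Delta_n$, where the values of $u_n$ are unknown, and no single Scherk graph is infinite on both; so ``translate until it lies above $u_m$'' does not give domination of $u_n$ on $\partial\Delta_m$, and the claimed uniformity in $n\ge m$ does not follow. Second, and decisively, for large $n$ the quadrilateral $\Delta_n$ is non-convex at $\hat{p}$ (its interior angle there exceeds $\pi$), so the complete geodesic carrying the edge with data $n+c_4$ re-enters the interior of $\Delta_n$ beyond $\hat{p}$; hence $\Delta_n$ is \emph{not} contained in the half-plane over which that Scherk graph lives. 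The comparison can therefore only be run on the part of $\Delta_n$ on the $\hat{p}_1$-side of this geodesic, and symmetrically the lower Scherk barrier controls only the part on the $\hat{p}_1$-side of the other one. Fixed open subsets of $\Delta$ remain---points beyond the geodesic through $\hat{p}$ asymptotic to one boundary ray, e.g.\ near the opposite ray---on which you obtain no upper bound at all (respectively no lower bound), so local uniform boundedness, which is the actual content of the theorem over a non-convex domain, is not established. The appeal to a Jenkins--Serrin type result via \cite{younes2010} does not repair this: that reference concerns compact domains, and finiteness of the limit over the non-convex $\Delta$ is precisely what has to be proved.

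The paper closes exactly this hole with a different barrier: it takes two fundamental pieces $M_\pm$ of the minimal $k$-noids constructed in Theorem \ref{T:knoid}, one whose end goes to $+\infty$, placed above the boundary component of $M_k$ with value $k$, the other with end going to $-\infty$, placed below the component with value $-k$, and continues each by a horizontal umbrella so that it becomes a minimal section defined over \emph{all} of $\Delta_k$. Because these barriers contain the vertical fiber over $\hat{p}$ and wrap around it, the maximum principle (``no point of contact'' with the sequence $M_n$, $n\geq k$) yields two-sided bounds on every compact $K\subset\Delta_k$, including the reflex wedge at $\hat{p}$ that your half-plane comparisons miss. If you wish to keep Scherk graphs for the convex part, you would still need a barrier of this $k$-noid-plus-umbrella type to control the region behind $\hat{p}$; as written, your argument bounds the sequence only on part of $\Delta$.
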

\begin{proof}
Since $M_n$ is a sequence of sections, it is monotone increasing on $\Delta_k,\, n\geq k$. Moreover by the gradient estimate it is sufficient to prove that the sequence is uniformly bounded on each compact $K\subset\Delta$. We modify the proof of Theorem \ref{T:knoid}: For $K\subset\Delta$ we consider $k\in\N$ such that $K\subset\Delta_k$ and two fundamental pieces $M_{\pm}$ of a minimal $k$-noid from above, one with the end going to infinity and the other going to minus infinity. As before we may orientate them such that the positive end of $M_+$ lies above the boundary component of $M_k$ with value $k$ and the negative end of $M_K$ lies below the boundary component of $M_k$ with value $-k$. We continue $M_\pm$ with a horizontal umbrella such that each is a minimal section well-defined on $\Delta_k$. By the maximum principle there is no point of contact if we consider the sequence $M_n$ for $n\geq k$. After diagonalization we obtain a minimal surface $M_\infty$ which is a section over $\Delta$. As is the proof of Theorem \ref{T:knoid} it extends by construction without branch points by Schwarz reflection across its edges.
\end{proof}

The minimal surface $M_\infty$ is a fundamental piece of a $2k$-noid; we reflect its sister surface to construct a cmc surface in $\Sigma(\kappa)\times\R$: 

\begin{theorem}
For $H\in\left[0, 1/2\right]$ and $k\geq2$ there exists a two-parameter family \[\left\{\widetilde{M}_{d,\alpha}\colon d>0, 0<\alpha\leq\pi/(2k)\right\}\] of constant mean curvature $H$ surfaces in $\Sigma(\kappa)\times\R$, $\kappa\leq0$, such that:

\noindent
$\bullet\, \widetilde{M}_{d,\alpha}$ has $k$ vertical mirror planes enclosing an $\pi/k$-angle,

\noindent
$\bullet\, \widetilde{M}_{d,\alpha}$ has one horizontal mirror plane and

\noindent
$\bullet\,$for $\alpha=\pi/(2k)$ the surface $\widetilde{M}_{d,\alpha}$ is symmetric and coincides with the surface from Subsection \ref{S:knoid}.
\end{theorem}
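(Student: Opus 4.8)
The plan is to carry the fundamental piece $M_\infty$ into the product space by Daniel's correspondence and then to complete it by Schwarz reflection, following the conjugation-and-reflection argument used for the $k$-noids in Subsection~\ref{S:knoid}. First I would apply Theorem~\ref{t:correspondence} to the minimal section $M_\infty\subset E(\kappa+4H^2,H)$ to obtain a locally isometric sister $\widetilde{M}_\infty$ with constant mean curvature $H$ in $\Sigma(\kappa)\times\R$. Because $M_\infty$ is a graph, $\widetilde{M}_\infty$ is a (multi-)graph, and because $M_\infty$ is bounded by the horizontal and vertical geodesics of $\Gamma=\Gamma_{d,\alpha}$, the mirror-curve/geodesic relation of \cite{MT2011} shows that $\partial\widetilde{M}_\infty$ consists of mirror curves whose conormal is perpendicular to totally geodesic vertical or horizontal planes.

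Next I would read off the mirror planes from the geodesic contour. As in the boundary construction, the vertical geodesic $\overline{p_4p_5}$ of $\Gamma$ yields a horizontal mirror curve in a horizontal plane, whereas the two horizontal geodesics enclosing the angle $\phi=\pi/k$ yield two mirror curves lying in vertical planes that meet at the same angle. Since $M_\infty$ extends without branch points, the correspondence guarantees that $\widetilde{M}_\infty$ does too, so Schwarz reflection extends it across each plane. Reflecting repeatedly in the two vertical planes, which meet at angle $\pi/k$, generates a dihedral group of order $2k$ and hence $k$ distinct vertical mirror planes together with the rotations by multiples of $2\pi/k$; reflecting in the horizontal plane provides the one horizontal mirror plane. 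Assembling the copies produces a complete cmc $H$ surface $\widetilde{M}_{d,\alpha}$ carrying exactly the asserted symmetries.

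The main obstacle, just as for the $k$-noids, is the geometric analysis of the mirror curves needed to ensure that the reflected copies fit together without overlapping and without branch points. I would parametrize each boundary component of $M_\infty$ and transport its data to the sister through $\tilde{k}=-t+H$, $\tilde{t}=k$, and the twist formula~\eqref{E:twist}. For the horizontal sister curve coming from $\overline{p_4p_5}$ I would prove embeddedness by the Gau\ss{}-Bonnet argument used after Theorem~\ref{T:knoid}: choosing the downward normal makes the twist monotone, forcing the geodesic curvature below $2H$, so a self-intersection bounding a domain $\Omega$ would, by Gau\ss{}-Bonnet, require a total twist exceeding $\pi$, a contradiction. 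The genuinely new difficulty is that $\Delta$ is non-convex and that $\Gamma$ carries two different kinds of ends; consequently the monotone rotation of the normal along each edge and the matching of the two vertical mirror curves at the $\pi/k$-angle have to be verified component by component, rather than inherited from the symmetric situation.

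Finally I would treat the degenerate parameter $\alpha=\pi/(2k)$. Then the construction angle $\delta=\phi/2-\alpha$ vanishes, so the quadrilateral $\Delta_n(d,\alpha)$ becomes symmetric about its diagonal of length $d$ and the two horizontal edges of $\Gamma$ are exchanged by the reflection in the vertical plane through that diagonal. Hence $\Gamma$ reduces to the symmetric contour underlying the $2k$-noid of Subsection~\ref{S:knoid}, the fundamental piece $M_\infty$ inherits this extra mirror symmetry, and by the uniqueness of the minimal section its sister coincides with the surface constructed there. This yields the last assertion.
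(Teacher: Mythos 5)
Your proposal follows essentially the same route as the paper's proof: apply Daniel's correspondence (Theorem \ref{t:correspondence}) to the fundamental piece $M_\infty$, identify the three boundary mirror curves (one horizontal, two in vertical planes enclosing the angle $\pi/k$), and extend by Schwarz reflection to a complete cmc $H$ surface built from $4k$ fundamental pieces. In fact you supply more detail than the paper itself, which neither repeats the Gau\ss{}-Bonnet embeddedness analysis (given only in the $k$-noid theorem of Subsection \ref{S:knoid}) nor addresses the degenerate case $\alpha=\pi/(2k)$, where your argument via the symmetry of $\Delta$ and uniqueness of the minimal section justifies the coincidence claim that the paper's proof leaves implicit.
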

\begin{proof}
By Daniel's correspondence (\cite{daniel2007}, see Theorem \ref{t:correspondence} above) the fundamental piece $M_\infty$ has a sister surface $\widetilde{M}_\infty$ with constant mean curvature $H$ in $\Sigma(\kappa)\times\R$. By construction, $\widetilde{M}_\infty$ has three curves in mirror planes: one in a horizontal and two in vertical planes; the two vertical mirror planes enclose an angle $\pi/k$. Schwarz reflection about those planes extends the surface to a complete MC $H$ surface $\widetilde{M}_{d,\alpha}$ with $2k$ ends. The MC $H$-surface $\widetilde{M}_{d,\alpha}$ consists of $4k$ fundamental pieces $\widetilde{M}_\infty$.
\end{proof}

\bibliographystyle{amsalpha}
\bibliography{/Users/juliaddg/Documents/bibliography}

\end{document}